\documentclass[a4paper,12pt,reqno]{amsart}
\usepackage{mathrsfs,amsmath,amssymb,latexsym,amsfonts, amscd}


\title{Some birationality criteria on 3-folds with $p_g>1$}
\author{Meng Chen}

\dedicatory{Dedicated to the memory of professor Gang Xiao}

\address{\rm School of Mathematical Sciences, Fudan University, Shanghai 200433, China}
\email{mchen@fudan.edu.cn}

\thanks{Supported by National Natural Science Foundation of China (\#11171068, \#11121101, \#11231003)}

\newcommand{\bC}{{\mathbb C}}
\newcommand{\bQ}{{\mathbb Q}}
\newcommand{\bP}{{\mathbb P}}
\newcommand{\roundup}[1]{\lceil{#1}\rceil}

\newcommand\Vol{\text{\rm Vol}}
\newcommand\lrw{\longrightarrow}
\newcommand\rw{\rightarrow}
\newcommand\hrw{\hookrightarrow}

\newcommand\OO{{\mathcal{O}}}
\newcommand\OX{{\mathcal{O}_X}}

\newcommand\He{H_{\varepsilon}}
\newcommand\eps{\varepsilon}
\newcommand{\map}{\varphi}
\newcommand{\genum}{\geq_{\text{num}}}
\newtheorem{thm}{Theorem}[section]
\newtheorem{lem}[thm]{Lemma}

\theoremstyle{definition}
\newtheorem{defn}[thm]{Definition}

\newtheorem{fact}[thm]{Fact}

\newtheorem{rem}[thm]{Remark}
\theoremstyle{remark}

\newtheorem{Rema}{\bf Remark}
\begin{document}
\begin{abstract} We give some birationality criteria for $\map_m$ ($m=4$, $5$, $6$, $7$) on general type 3-folds with $p_g\geq 2$ by means of an intensive classification. In particular, we show that $\varphi_7$ is not birational if and only if $p_g(X)=2$ and $X$ admits a genus 2 curve family of canonical degree $\frac{2}{3}$.  When the canonical volume is large, we also characterize the birationality of $\varphi_4$, $\varphi_5$ and $\varphi_6$. 
\end{abstract}
\maketitle


\pagestyle{myheadings}
\markboth{\hfill M. Chen\hfill}{\hfill Some birationality criteria on 3-folds\hfill}
\numberwithin{equation}{section}
\section{\bf Introduction}
We work over any algebraically closed field $k$ of characteristic 0 (for instance, $k=\bC$).

Pluricanonical maps are usually important tools to study birational geometry of projective varieties. Recently, due to the boundedness theorem independently proved by Hacon-M$^{\text{c}}$kernan, Takayama and Tsuji, it has raised a hope to look into explicit birational geometry of high dimensional varieties of general type. In dimension 3, the development is much favorable by virtue of \cite{Ex1, Ex2} where the following is known:
\begin{itemize}
\item[$\diamond$] the volume $\Vol(V)\geq \frac{1}{2660}$, and
\item[$\diamond$] the pluricanonical map $\map_m$ is birational for all $m\geq 73$
\end{itemize}
where $V$ is any nonsingular projective 3-fold of general type. Even though, birational geometry in dimension 3 is far from being well-understood. 

As far as we know 3-folds with very small volume and very bad pluricanonical behaviors all have invariants $p_g=q=0$ and they correspond to surfaces with $p_g=q=0$ (i.e. Godeaux surfaces, Campedelli surfaces, Burniat surfaces and so on). Threefolds with $p_g=1$ form a very typical class of which pluricanonical behaviors are slightly better. Those with $p_g>1$ should be ragarded as general objects from the point of view of ``moduli''. A feasible strategy to study 3-folds of general type might be to distinguish the set of 3-folds into 3 subsets and to treat each of them by an appropriate method, say,
$${\mathfrak V}_3=\underset{(={\mathfrak V}_{3,0})}{\{X_1| p_g(X_1)=0\}}\cup \underset{(={\mathfrak V}_{3,1})}{\{X_2| p_g(X_2)=1\}}\cup \underset{(={\mathfrak V}_{3,2})}{
\{X_3| p_g(X_3)\geq 2\}}$$
where $X_i$ denotes an arbitrary 3-fold of general type. 
Though all above 3 parts are known up to some extent, none of them is clear enough. The motivation of this article is to go further on classification. 

In this paper we are interested in the third part ${\mathfrak V}_{3,2}$. Based on our previous papers \cite{IJM, MA} and Chen-Zhang \cite{MZ}, we would like to investigate certain parallel phenomenon between surfaces and 3-folds. {}First of all let us make the following interesting comparison of known results: \bigskip

\begin{center}
\noindent{\tiny
\begin{tabular}{c|c}
 {\bf Birationality of $\phi_m$ on surfaces $S$} & {\bf Birationality of $\map_m$ on 3-folds $X$}\\
 & {\bf with $p_g\geq 2$}\\
(Bombieri \cite{Bom}, Miyaoka \cite{Miy})& (Chen \cite{IJM, MA}, Chen-Zhang \cite{MZ})\\
\hline\hline
&\\
$\phi_5$ is birational  &
$\map_8$ is birational \\
& $\map_7$ (?)\\
&\\
\hline
&\\
$(K^2,p_g)\neq (1,2)\iff$  & $p_g\geq 3\Longrightarrow$ $\map_6$ is birational;\\
$\phi_4$ is birational & $p_g=2$ (?)\\
&\\
\hline
&\\
$(K^2, p_g)\neq (1,2)$, $(2,3)$ $\iff$& $p_g\geq 4\Longrightarrow$ $\map_5$ is birational;\\
$\phi_3$ is birational & $p_g=2,\ 3$ (?)\\
&\\
\hline
&\\
when $K^2\geq 10$ or $p_g\geq 6$, & when $p_g\geq 5$,\\
$\phi_2$ is non-birational {\it iff} $S$ admits & $\map_4$ is non-birational {\it iff} $X$ admits\\
a family of genus 2 curves; &
a genus 2 curve family of\\
 &canonical degree 1\\
& \\
when $K^2\leq 9$, $\phi_2$ (?) &  when $p_g\leq 4$, $\map_4$ (?)\\
&\\
\hline
$\phi_1$ (?)& $\map_1$, $\map_2$, $\map_3$ (?)\\
\hline\hline
\end{tabular}}\end{center}
where ``?'' means an open status, $\phi_n:=\Phi_{|mK_S|}$ and $\map_m:=\Phi_{|mK_X|}$.

We start with a translation of Bombieri's famous theorem (\cite{Bom}) from another point of view.
\medskip

\noindent{\bf Theorem 0.} {\em Let $S$ be a minimal projective surface of general type. Then $\map_4$ is non-birational if and only if $S$ admits a genus 2 curve family ${\mathscr C}$ of canonical degree $1$.}
\begin{proof} For a general curve $C\in {\mathscr C}$ with $(K_S\cdot C)=1$, we must have $C^2>0$ since it is an odd number and $C$ is moving. Then we get $K_S^2=1$ by the Hodge index theorem. The Noether inequality implies $p_g(S)=0,\ 1,\ 2$. The case $p_g(S)=0$ is impossible since $(K_S\cdot C)\geq 2$ by Miyaoka \cite[Lemma 5]{Miy}. The case $p_g(S)=1$ is impossible either since, otherwise, $K_S\equiv C$ which contradicts to the fact that $S$ is simply connected (see Bombieri \cite{Bom}). Thus $S$ is a $(1,2)$ surface and $\map_4$ is non-birational according to Bombieri \cite{Bom}.

Conversely, since $\map_4$ is non-birational, $S$ is a (1,2) surface by Bombieri \cite{Bom} and the canonical curve family on $S$ is the desired family of canonical degree 1.
\end{proof}

The aim of this paper is to study those open cases on 3-folds. Here is one of the main results:

\begin{thm}\label{m1} Let $X$ be a minimal projective 3-fold  of general type and $p_g(X)\geq 2$. Then
\begin{itemize}
\item[(1)] $\map_7$ is birational if and only if either $p_g(X)>2$ or $X$ does not admit any genus 2 curve family ${\mathscr C}$ of canonical degree $\frac{2}{3}$. (see Definition \ref{CF} for exact meaning of a ``curve family'') 

\item[(2)] $\map_6$ is birational unless $p_g=2$ and $K_X^3\leq 6$.

\item[(3)] $\map_5$ is birational unless $X$ has one of the following numerical types:
\begin{itemize}
\item[3.1.] $p_g(X)=3$ and $K_X^3\leq 6$;
\item[3.2.] $p_g(X)=2$ and $K_X^3\leq 81$.
\end{itemize}
\end{itemize} \end{thm}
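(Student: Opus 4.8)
The plan is to run the standard induction-on-dimension machinery for pluricanonical maps, organized around the canonical fibration. First I would take a birational modification $\pi\colon X'\to X$ with $X'$ smooth on which the movable part of $|K_{X'}|$ is base-point free, inducing a morphism whose Stein factorization gives a fibration $f\colon X'\to \Gamma$ onto the canonical image. Since $p_g(X)\geq 2$, one has $\dim\Gamma=d_1\in\{1,2,3\}$, and I would split the whole argument along this trichotomy. When $d_1=3$ the map $\map_1$ is already generically finite and birationality of $\map_m$ for $m\geq 5$ is comparatively routine; the substantive work lies in $d_1=2$ (general fibre a curve $C$) and $d_1=1$ (general fibre a surface $F$ of general type).

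The core technical device is to separate two general points $x_1,x_2$ by restriction and lifting. I would first separate the fibres containing $x_1,x_2$ using the $p_g$ sections of $|K_X|$ together with divisors pulled back from $\Gamma$ (or from the base curve $B$ when $d_1=1$), and then separate points inside a single fibre. For the latter, fixing the fibre $F$ (resp.\ curve $C$), Kawamata--Viehweg vanishing yields surjectivity of $H^0(X',mK_{X'})\to H^0\bigl(F,mK_{X'}|_F\bigr)$ once $(m-1)\pi^*K_X-F$ is arranged to be nef and big, reducing birationality of $\map_m$ along $F$ to birationality of the restricted system $\bigl|mK_{X'}|_F\bigr|$. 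Writing $\xi=(\pi^*K_X\cdot C)$ and using $\pi^*K_X|_F\genum$ an effective divisor involving $K_F$, I would bound from below the degree $m\xi$ of the series cut out on a general curve $C\subset F$ and invoke the surface pluricanonical theory summarized in the introductory table, concluding separation except in the borderline fibre types.

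For part (1) the ``only if'' direction is the easy half: if $X$ carries a genus $2$ curve family $\CC$ of canonical degree $\tfrac23$, then on a general member $C$ the movable part of $|7K_X|$ restricts to an effective series of degree at most $\lfloor\tfrac{14}{3}\rfloor=4$; since $2g(C)+1=5$, this series is not very ample, and the hyperelliptic structure of $C$ forces it to identify the two points in a general fibre of the $g^1_2$, so $\map_7$ is not birational. For the ``if'' direction I would show that whenever $p_g>2$, or $p_g=2$ with no such family, the number $\xi$ is large enough that $7\xi\geq 5$ and the restricted series separates conjugate points, forcing birationality; the value $\tfrac23$ is exactly the borderline at which $7\xi$ falls below $5=2g(C)+1$.

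The main obstacle, and the source of the numerical bounds in (2) and (3), is the case $d_1=1$ with $F$ of smallest type---a $(1,2)$-surface ($c_1^2(F)=1$, $p_g(F)=2$) or a $(2,3)$-surface---where the restriction estimates are tightest. Here I would run a slope-type argument bounding $K_X^3$ below in terms of $\xi$ and the genus of the base, so that $\map_m$ becomes birational as soon as $K_X^3$ exceeds an explicit constant; the exceptional ranges $K_X^3\leq 6$ for $\map_6$ and $K_X^3\leq 81$ for $\map_5$ with $p_g=2$ emerge from optimizing this constant over the worst fibration geometry. The delicate point is that in these borderline cases the vanishing-plus-restriction bound can fall just short, so one must track the base-locus contributions and the hyperelliptic geometry of the fibres with precision rather than rely on the generic inequalities; classifying exactly when the extremal configurations occur is what pins down the stated thresholds.
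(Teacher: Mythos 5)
Your skeleton---the trichotomy on $d_1$, Kawamata--Viehweg vanishing plus restriction to fibers, case analysis on fiber types, and a slope argument converting large $K_X^3$ into birationality---is indeed the paper's architecture, but your proof of the non-birationality half of part (1) contains a genuine gap. You claim that since the movable part of $|7K_{X'}|$ cuts out on a general member $C$ a series of degree at most $\lfloor 14/3\rfloor = 4 < 5 = 2g(C)+1$, the series is not very ample and ``the hyperelliptic structure of $C$ forces it to identify the two points in a general fibre of the $g^1_2$.'' The last inference is false. On a genus-$2$ curve every degree-$4$ class has the form $K_C+P+Q$; such a class is indeed never very ample, but when $P+Q\not\sim K_C$ the system $|K_C+P+Q|$ fails to separate only the single pair $(P,Q)$ and maps $C$ birationally onto a nodal plane quartic. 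So a degree bound alone cannot rule out birationality: one must prove that the restricted system is exactly $|2K_C|$. That is precisely what the paper does in Subsection \ref{2/3}: it first shows (Claim \ref{2/3}.1, via the Hodge index theorem and \cite[Lemma 4.7]{MZ}) that the assumed family forces $\Gamma\cong\bP^1$, forces the general fiber $F$ to be a $(1,2)$ surface, and forces the family to consist of the canonical pencils of the fibers; it then uses $\pi^*(K_X)|_F\leq K_F$ together with $K_F|_C=2P$, where $P$ is the unique base point of $|K_{F_0}|$ and $2P\sim K_C$, to pin down the auxiliary divisor as $D=2P$, whence $M_7|_C\sim K_C+D\sim 2K_C$ and $|M_7||_C=|2K_C|$, a genuinely $2$-to-$1$ map (Claim \ref{2/3}.2). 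Your proposal contains no substitute for this identification.

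There are two further mismatches. In the birationality half of part (1), your criterion ``$7\xi\geq 5$'' places the threshold at $\xi\geq\frac{5}{7}$, leaving the interval $\frac{2}{3}<\xi<\frac{5}{7}$ uncovered; the paper instead runs \cite[Theorem 3.6]{MZ}, whose hypothesis $\alpha_7=3\xi>2$ succeeds for every $\xi>\frac{2}{3}$ because the vanishing construction produces $|K_C+D|$ with $D$ an integral divisor of degree at least $3$. And in parts (2)--(3) your ``slope-type argument'' is the right instinct but cannot by itself produce the constants $6$ and $81$: the paper obtains them from Lemma \ref{RR}, i.e., from the ratio $\tau_0=K_X^3/(3K_{F_0}^2)$, which upgrades the restriction inequality to $\pi^*(K_X)|_F\genum (\frac{\tau_0}{\tau_0+1}-\eps)\sigma^*(K_{F_0})$, combined with Masek's separation criterion (Lemma \ref{masek}) for fibers with $K_{F_0}^2\geq 3$ and the Bogomolov--Miyaoka--Yau inequality when $p_g(X)=2$ and $K_{F_0}^2\geq 19$; for instance, $81$ arises exactly as the worst case of $K_X^3>\frac{9}{2}K_{F_0}^2$ at $K_{F_0}^2=18$. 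Without these tools the exceptional ranges in the statement cannot be pinned down.
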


We provide some supporting examples which show that all above exceptional cases do really occur.
\medskip

\noindent{\bf Example A.} (1) The general hypersurface 
$X:=X_{16}\subset \bP(1,1,2,3,8)$ (see Fletcher \cite[p151]{Flt}) of degree 16 has the invariants $p_g=2$ and $K_X^3=\frac{1}{3}$. It is clear that $\map_7$ is non-birational.
Automatically $\map_6$, $\map_5$ are non-birational either since $p_g>0$. According to our earlier result in \cite{MA}, $X$ is canonically fibred by (1,2) surfaces and the relative canonical map of $\map_1$ of $X$ gives a genus 2 curve family of canonical degree $\frac{2}{3}$. 

(2) The general hypersurface $X:=X_{14}\subset \bP(1,1,2,2,7)$ of degree 14 has the invariants $p_g=2$ and $K_X^3=\frac{1}{2}$ and there are 7 orbifold points $\frac{1}{2}(1,-1,1)$ on $X$. Clearly $\map_7$ is birational. Since the Cartier index of $X$ is $2$, it does not admit any curve family of canonical degree $\frac{2}{3}$, which accounts for the birationality of $\map_7$ by virtue of Theorem \ref{m1}(1). Note that $\map_6$ of $X$ is non-birational. 

(3) The general hypersurface $X:=X_{12}\subset \bP(1,1,1,2,6)$ of degree 12 has the invariants $p_g=3$ and $K_X^3=1$, but $\map_5$ is non-birational. 

(4) The codimension 2 complete intersection $$X:=X_{4,12}\subset \bP(1,1,2,2,3,6)$$ of bi-degree $(4,12)$ has the invariants $p_g=2$ and $K_X^3=\frac{2}{3}$, but $\map_5$ is non-birational. 
\bigskip

Going on the story as in Chen-Zhang \cite{MZ}, we shall characterize the birationality of $\map_4$ as follows:

\begin{thm}\label{m4} Let $X$ be a minimal projective 3-fold of general type and $X$ satisfies one of the following conditions:
\begin{itemize}
\item[0.] $p_g(X)\geq 5$. (established in \cite{MZ}) 
\item[1.] $p_g(X)=4$ and $K_X^3>28$.
\item[2.] $p_g(X)=3$ and $K_X^3>180$.
\item[3.] $p_g(X)=2$ and $K_X^3>855$.
\end{itemize}
Then $\map_4$ is birational if and only if $X$ does not admit any genus two curve family ${\mathscr C}$ of canonical degree 1.
\end{thm}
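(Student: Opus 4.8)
The plan is to prove the two implications separately; the forward (obstruction) implication mirrors the proof of Theorem 0, while the reverse (birationality) implication carries the bulk of the work through a fibration analysis.

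For the obstruction direction, suppose $X$ admits a genus two curve family ${\mathscr C}$ of canonical degree $1$ and let $C$ be a general member. Then $(4K_X\cdot C)=4=2g(C)$, and I would argue that the restriction of $|4K_X|$ to $C$ is contained in a multiple of the hyperelliptic $g^1_2$; since a degree-$2g(C)$ subsystem on a genus two curve that factors through the $2{:}1$ hyperelliptic map cannot separate two general points, $\map_4$ is non-birational. This is the same mechanism as in Theorem 0.

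For the converse, assume $X$ carries no such family and satisfies one of the numerical hypotheses; the case $p_g\geq 5$ being \cite{MZ}, I focus on $p_g=2,3,4$. After a resolution $\pi\colon X'\to X$ on which the movable part of $|K_{X'}|$ is base-point-free, I would run the trichotomy on $d:=\dim\map_1(X)$ (note that $d=1$ is forced when $p_g=2$). I prove birationality of $\map_4$ by the standard three-step separation principle: separate distinct members of the governing family, separate general curves inside a fixed member, and separate two general points on such a curve. When $d=3$ the canonical image is generically finite over $X$ and $\map_4$ separates general points directly via Kawamata--Viehweg vanishing; when $d=2$ one works on the canonical fibration onto a surface, whose general fibre is a curve, and a genus two fibre of canonical degree $>1$ is separated while canonical degree $1$ returns the excluded family.

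The decisive case is $d=1$, where $\map_1$ induces a fibration $f\colon X'\to\Gamma$ with general fibre $S$ a smooth surface of general type. The key point is that, by Kawamata--Viehweg vanishing, the restriction map $H^0(X',4K_{X'})\to H^0(S,4K_{X'}|_S)$ is surjective, so the restriction of $\map_4$ to a general $S$ realizes the $4$-canonical map of $S$; by Bombieri (equivalently Theorem 0) this is birational unless $S$ is a $(1,2)$ surface. When $S$ is a $(1,2)$ surface the obstruction localizes on the canonical genus two pencil of each fibre, and these curves sweep out a genus two curve family ${\mathscr C}$ on $X$ of canonical degree $\xi:=(K_X\cdot C)$; if $\xi>1$ then $(4K_X\cdot C)=4\xi\geq 2g(C)+1$ and, after lifting sections by fractional vanishing, $\map_4$ separates general points on $C$, whereas the $(1,2)$ structure pins the only remaining possibility to $\xi=1$, which is exactly the family excluded by hypothesis. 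I expect the main obstacle to be quantitative: using Reid's Riemann--Roch together with $\bQ$-divisor Kawamata--Viehweg vanishing to secure the surjectivity above and to control $\xi$, thereby verifying that the explicit thresholds $K_X^3>\tfrac{72}{5},\,180,\,855$ (for $p_g=4,3,2$ respectively) are precisely what eliminate every sporadic low-volume non-birational configuration apart from the genus two, canonical-degree-one family.
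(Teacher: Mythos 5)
Your reduction in the decisive case $d=1$ rests on a step that is false: Kawamata--Viehweg vanishing does \emph{not} give surjectivity of $H^0(X',4K_{X'})\to H^0(S,4K_{X'}|_S)$, so you cannot realize the full $4$-canonical system of the fibre and then quote Bombieri. Surjectivity would need $H^1(X',4K_{X'}-S)=0$, i.e.\ $3K_{X'}-S$ to be numerically a nef and big divisor plus a \emph{fractional} simple normal crossing boundary; but $3K_{X'}-S\equiv 2\pi^*(K_X)+(p-1)S+E_1'+3E_{\pi}$ carries integral components ($3E_{\pi}$, the integral part of $E_1'$) which cannot be absorbed into a klt boundary. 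What vanishing actually yields is only
$$|K_{X'}+\roundup{D}+S||_S=|K_S+\roundup{D|_S}|$$
for suitable nef and big $D$ with $K_{X'}+\roundup{D}+S\leq 4K_{X'}$, and by the canonical restriction inequality \cite[Lemma 3.7]{MZ} (or Lemma \ref{RR}) one only gets $D|_S\genum c\,\sigma^*(K_{F_0})$ with $c<3$; one never reaches $|4K_S|=|K_S+3K_S|$. This is not a technicality: the paper's Example A(2), $X_{14}\subset\bP(1,1,2,2,7)$, has $p_g=2$, $d_1=1$, and its canonical fibres are $(2,3)$ surfaces --- not $(1,2)$ surfaces --- yet $\map_6$ is non-birational. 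Your argument, applied verbatim with $m=6$ (where it is even more favorable), would make $\map_6$ birational for this $X$ via Bombieri on the $(2,3)$ fibre, a contradiction. This failure is exactly why the paper works with the sub-adjoint systems $|K_F+\roundup{\mathcal L}|$ and needs Lemma \ref{RR}, Lemma \ref{rr}, Masek's criterion (Lemma \ref{masek}) and \cite[Theorem 3.6]{MZ}, and why the hypotheses $K_X^3>180$ and $K_X^3>855$ appear: a large $\tau_0=K_X^3/3K_{F_0}^2$ pushes the coefficient $c$ past the point where these surface criteria apply. The thresholds do not ``secure the surjectivity''; no bound on $K_X^3$ can.

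The other two implications are also thinner than the problem allows. In the case $d=2$ you claim a genus two fibre of canonical degree $\xi>1$ ``is separated,'' but the vanishing argument on $S$ only produces systems $|K_C+D|$ with $\deg D$ roughly $(2-\tfrac{1}{\beta})\xi$, so $\xi>1$ alone is insufficient (with $\beta=w_2$ as small as $1$ or $2$ one would need $\xi>2$ or $\xi>\tfrac43$); the paper must bootstrap $\xi>1$ up to $\xi\geq\tfrac65$ by iterating \cite[Theorem 3.6]{MZ} with the Hodge-index/Lemma \ref{rr} boost --- this is precisely where $K_X^3>\tfrac{72}{5}$ and $K_X^3>144$ enter --- and for $w_2=2$, $\tfrac87\leq\xi\leq\tfrac43$ it even requires the classification of the degree-two canonical image $\Sigma\subset\bP^3$ (Claim \ref{42}.4). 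Finally, in the obstruction direction, asserting that $|4K_X|$ restricts on $C$ into a multiple of the hyperelliptic $g^1_2$ is the entire point, not a formality: a \emph{general} line bundle of degree $4=2g$ on a genus two curve is very ample, so one must actually prove $|4K_{X'}|\big|_C=|2K_C|$; the paper flags this as ``not a trivial statement at all'' and carries it out along the lines of \cite[Proposition 4.6]{MZ}.
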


Our classification in this paper has provided a broader way to find non-trivial examples with $\map_4$ non-birational. See, for instance, the following:
\medskip

\noindent{\bf Example B}. According to Theorem \ref{d=3}, 
any minimal 3-fold $X$ having terminal singularities, $p_g(X)=4$ and $K_X^3=2$ (such that $\map_1$ is generically finite) must have non-birational $\map_4$. The hypersurface $X_{10}\subset \bP(1,1,1,1,5)$ is a very special candidate which is smooth. 

\begin{Rema} Theorem \ref{m4} is parallel to Theorem 0. Some other examples with non-birational $\map_4$ can be found in Fletcher \cite{Flt} and Chen-Zhang \cite{MZ}.
\end{Rema}

\begin{Rema} The $\map_4$ and $\map_3$ have ever been partially studied by Zhou \cite{Zhou} and Zhu \cite{Zhu} under extra conditions.  
\end{Rema}


Here are some of the main observation of this paper:
\begin{itemize}
\item[$\diamond$] For a nef and big $\bQ$-divisor ${\mathcal L}$ on a smooth projective surface $S$ with $p_g(S)=1$, the geometric nature of the linear system $|K_S+\roundup{{\mathcal L}}|$ is difficult to detect, especially when (up to numerical equivalence) ${\mathcal L}<2\sigma^*(K_{F_0})$. Our solution is to deform it into a successful application of Masek's interesting theorem in \cite{Masek} -- a generalized form of Ein-Lazarsfeld's argument in \cite{E-L}. 

\item[$\diamond$] When $X$ is fibred by surfaces with very small invariant $c_1^2$, a large ratio ${K_X^3}/{3c_1^2}$ will be much more effective in improving our ``canonical restriction inequality'' in \cite[Lemma 3.7]{MZ}, which will amend whatever we didn't realize before, but one needs to assume $K_X^3$ to be large enough.   

\item[$\diamond$] Theorem 0 and the main statements of this paper lead us to expect that the existence of certain curve family with very small canonical degree essentially affects the birational geometry of varieties in question.

\item[$\diamond$] Parallel to the surface case, it is impossible to handle things in a uniform way to treat 3-folds with very small invariants (for instance, with small $p_g$ and $K^3$). Sometimes the very refined classification of surfaces are needed (see, for example, Claim \ref{42}.4 and Remark \ref{d3P3}). It is even inevitable to ask lots of new questions (on surfaces with $p_g\leq 1$) which, unfortunately, are still mysterious to experts on surfaces.  
\end{itemize}
\bigskip

We are in favor of the following symbols:
\medskip

{\small
\noindent ``$\sim$'' denotes linear equivalence or ${\bQ}$-linear equivalence;

\noindent ``$\equiv$'' denotes numerical equivalence;

\noindent ``$A\geq_{\text{num}}B$'' means that $A-B$ is numerically equivalent to an effective $\bQ$-divisor.}

\section{\bf Definitions, lemmas, notations and the setting}
Throughout $X$ will be a minimal projective 3-fold of general type, on which $\omega_X=\OX(K_X)$ is the canonical sheaf and $K_X$ a canonical divisor. 
\medskip

\subsection{\bf Fixed notation and setting}\label{set}\

Assume $p_g(X):=h^0(X, \omega_X)\geq 2$. We may study the geometry induced from the canonical map $\varphi_{1}:X\dashrightarrow \bP^{p_g-1}$ where $\map_1$ is usually a rational map. 

{}Fix an effective Weil divisor $K_1\sim K_X$. Take successive blow-ups $\pi: X'\rightarrow X$, which exists by Hironaka's big theorem, such that:
\smallskip

(i) $X'$ is nonsingular,

(ii) the movable part of $|K_{X'}|$ is base point free,

(iii) the support of $\pi^*(K_1)$ is of simple normal crossings.
\smallskip

Denote by $g$ the composition $\varphi_{1}\circ\pi$. So $g:
X'\rightarrow \Sigma\subseteq{\mathbb P}^{p_g(X)-1}$ is a morphism.
Let $X'\overset{f}\rightarrow \Gamma\overset{s}\rightarrow \Sigma$ be
the Stein factorization of $g$. We get the following commutative diagram:
\medskip

\begin{picture}(50,80) \put(100,0){$X$} \put(100,60){$X'$}
\put(170,0){$\Sigma$} \put(170,60){$\Gamma$} \put(112,65){\vector(1,0){53}}
\put(106,55){\vector(0,-1){41}} \put(175,55){\vector(0,-1){43}}
\put(114,58){\vector(1,-1){49}} \multiput(112,2.6)(5,0){11}{-}
\put(162,5){\vector(1,0){4}} \put(133,70){$f$} \put(180,30){$s$}
\put(92,30){$\pi$} \put(135,-8){$\varphi_{1}$}\put(136,40){$g$}
\end{picture}
\bigskip

\noindent We may write $K_{X'}=\pi^*(K_X)+E_{\pi}= M_1+Z_1,$ where $|M_1|$ is the moving part of $|K_{X'}|$, $Z_1$ the fixed part and
$E_{\pi}$ an effective ${\bQ}$-divisor which is a sum of distinct exceptional divisors with rational coeffients. For any positive integer $m$, whenever taking the round-up of
$m\pi^*(K_X)$, we always have $\roundup{m\pi^*(K_X)}\leq mK_{X'}$ by the definition of $\pi^*$. Since $h^0(X', \OO_{X'}(M_1))=h^0(\omega_X)$, we may also write $\pi^*(K_X)=M_1+E_1'$ where $E_1'=Z_1-E_{\pi}$ is an effective ${\mathbb Q}$-divisor. Set $d_1:=\dim\overline{\varphi_1(X)}$. Clearly one has $0<d_1\leq 3$. 

If $d_1=2$, a general fiber $C$ of $f$ is a smooth
projective curve of genus $\geq 2$. We say that $X$ is {\it canonically fibred by curves}.

If $d_1=1$, a general fiber $F$ of $f$ is a smooth
projective surface of general type. We say that $X$ is {\it
canonically fibred by surfaces} with invariants $(c_1^2(F_0), p_g(F)),$ where $F_0$ is the minimal model of $F$ obtained from the contraction morphism $\sigma: F\rightarrow F_0$. We may write $M\equiv p F$ where $p\ge p_g(X)-1$. Denote $b:=g(\Gamma)$. 

{\it A generic irreducible element $S$ of} $|M|$ means either a general member of $|M|$ in the case $d_1\geq 2$ or, otherwise, a general fiber $F$ of $f$.

For any integer $m>0$, $|M_m|$ denotes the moving part of $|mK_{X'}|$. 
\medskip

\subsection{\bf Technical preparation}\

We always refer to Chen-Zhang \cite[Section 3]{MZ} for birationality principles (see \cite[Lemma 3.1, Lemma 3.2]{MZ}) and the ``canonical restriction inequality'' \cite[Lemma 3.7]{MZ}. 

\begin{defn} Let $|M|$ be a movable linear system on a normal projective variety $Z$. We say that the rational map $\Phi_{|M|}$ {\it distinguishes sub-varieties $W_1, W_2\subset Z$} (where $W_i$ is not contained in the base locus of $|M|$ for $i=1,2$)  if, set theoretically, $\overline{\Phi_{|M|}(W_1)}\nsubseteqq \overline{\Phi_{|M|}(W_2)}$ and $\overline{\Phi_{|M|}(W_2)}\nsubseteqq \overline{\Phi_{|M|}(W_1)}$. We say $\Phi_{|M|}$ {\it separates points $P, Q\in Z$} if $P, Q\not\in \text{Bs}|M|$ and $\Phi_{|M|}(P)\neq \Phi_{|M|}(Q)$.
\end{defn}

For the convenience of readers, we recall the key technical theorem here which will be frequently used throughout.

We define $p$ to be 1 if
$d_1\geq 2$. We assume further that $S$ is
equipped with a movable linear system $|G|$ and that a generic
irreducible element $C$ of $|G|$ is smooth. We define
$\xi:=(\pi^*(K_X)\cdot C)$. We fix an integer $m>0$, and we
consider a linear system on $S$, $|L_m|$, given by
$L_m:=|K_{S}+\roundup{(m-1)\pi^*(K_X)-S-\frac{1}{p}E_1'}|_{S}|$.
Then one has the following theorem:

\begin{thm}\label{CZkey}(Chen--Zhang \cite[Theorem 3.6]{MZ})  (1) If $|L_m|$ separates different generic irreducible elements of $|G|$ (namely $\Phi_{L_m}(C_1)\neq
\Phi_{L_m}(C_2)$ where $C_1$, $C_2$ are different generic
irreducible elements of $|G|$) and $\beta$ is a rational number
such that $\pi^*(K_X)-\beta C$ is numerically equivalent to an
effective ${\mathbb Q}$-divisor, then $\varphi_m$ is birational
if one of the following conditions is satisfied, where we set
$\alpha:=(m-1-\frac{1}{p}-\frac{1}{\beta})\xi$ and
$\alpha_0:=\roundup{\alpha}$:

$i$. $\alpha > 2;$

$ii$. $\alpha_0\geq 2$ and $C$ is non-hyperelliptic;

$iii$. $\alpha>0$, $C$ is non-hyperelliptic and $C$ is an even
divisor on $S$.
\medskip

\noindent (2) One has the inequality $\xi\geq
\frac{2g(C)-2+\alpha_0}{m}$ if one of the following conditions
is satisfied:

$iv$. $\alpha > 1;$

$v$. $\alpha>0$ and $C$ is an even divisor on $S$.
\end{thm}

\subsection{\bf Other necessary notions and lemmas}\ 


Since all 3-folds considered here have $p_g(X)>0$, we immediately have the following fact which will be tacitly used throughout the paper:

\begin{fact} Under the setting of \ref{set}, $\map_m:=\Phi_{|mK_{X'}|}$ distinguishes different generic irreducible elements $S$ of $|M_1|$ for all $m\geq 2$. 
\end{fact}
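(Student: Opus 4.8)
The plan is to reduce the statement to the very definition of distinguishing subvarieties and then to use the base-point freeness built into the setup. The claim is that for $m \geq 2$, the pluricanonical map $\map_m = \Phi_{|mK_{X'}|}$ distinguishes two different generic irreducible elements $S_1, S_2$ of $|M_1|$. Since $p_g(X) > 0$ we have $|M_1| \neq \emptyset$, and by construction (ii) of the setting \ref{set} the movable part $|M_1|$ is base-point free, so $\Phi_{|M_1|} = g$ is a genuine morphism onto $\Sigma$ (or factors through $f$ after Stein factorization). The point I would exploit is that the generic members $S_i$ of $|M_1|$ are, by definition, either distinct general members of a base-point-free system or distinct general fibers of $f$; in both cases they map to \emph{distinct} points (or distinct divisors) under $\Phi_{|M_1|}$.

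First I would observe that there is a natural inclusion of linear systems: since $M_1$ is a component of $mK_{X'}$ for every $m \geq 1$ (indeed $mK_{X'} = M_1 + (m-1)K_{X'} + Z_1 \geq_{\text{num}} M_1$ when $m\geq 2$, using that $K_{X'}$ is effective because $p_g > 0$), we get that $|M_1|$ is a subsystem of the image of the restriction of $|mK_{X'}|$. More precisely, pulling back hyperplanes, the map $\map_m$ factors the canonical map $\map_1 = \Phi_{|M_1|}$ up to the birational modifications already absorbed into $\pi$. The key mechanism is that $|mK_{X'}| \supseteq |M_1 + D|$ for a suitable effective $D$, so $\map_m$ refines $\Phi_{|M_1|}$, meaning $\map_m$ separates at least whatever $\Phi_{|M_1|}$ separates. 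Concretely I would argue: if $S_1 \neq S_2$ are two generic irreducible elements, then $\Phi_{|M_1|}$ already sends them to distinct images (two distinct members of a pencil/net map to distinct hyperplane sections; two distinct general fibers of $f$ map to distinct points of $\Gamma$, hence of $\Sigma$). Because $\map_m$ dominates $\map_1$, it cannot collapse this distinction, so $\overline{\map_m(S_1)} \neq \overline{\map_m(S_2)}$ with neither contained in the other.

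The cleanest formulation is to note that $|mK_{X'}|$ contains the sub-system $\{M_1 + T : T \in |(m-1)K_{X'} + Z_1|\}$, and in particular contains $M_1 + T_0$ for any fixed effective $T_0 \in |(m-1)K_{X'}+Z_1|$; this fixed choice shows $\map_m$ separates the generic elements $S_1, S_2$ exactly as $\Phi_{|M_1|}$ does, since adding a common fixed divisor does not merge the images of two elements that were already distinguished. I would then invoke the birationality principle \cite[Lemma 3.1, Lemma 3.2]{MZ} in the mild form that says a linear system separating generic members of a fibration distinguishes those members — here applied to the fibration structure $f : X' \to \Gamma$ (when $d_1 = 1$) or to the net/pencil directly (when $d_1 \geq 2$).

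The main obstacle, and the only subtlety worth stating carefully, is the case $d_1 = 1$: here the ``generic irreducible element'' is a general \emph{fiber} $F$ of $f$ rather than a member of $|M_1|$, and one must check that distinct fibers really do map to distinct points under $\map_m$ rather than being contracted. This follows because $g = s \circ f$ with $f$ having connected fibers and $s$ finite, so $\map_1$ contracts each fiber to a point and separates distinct fibers; since $m \geq 2$ gives a larger system $|mK_{X'}| \supseteq |M_1|(+ \text{fixed})$, the morphism $\map_m$ composed with projection recovers $f$, hence separates the fibers. I would conclude that in every case $\map_m$ distinguishes the different generic irreducible elements, which is precisely the asserted Fact.
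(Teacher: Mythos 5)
The paper never writes a proof of this Fact at all: it is stated as ``immediate'' from $p_g(X)>0$ and is tacitly justified by the birationality principles of \cite[Section 3]{MZ}, where the separation of distinct fibers is obtained from the Kawamata--Viehweg vanishing theorem, not from properties of $\map_1$. So your proposal has to stand on its own, and it contains a genuine gap in the only delicate case, namely $d_1=1$.

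Your entire mechanism is: $|mK_{X'}|\supseteq |M_1|+T_0$ for a fixed effective $T_0$, hence $\map_m$ separates whatever $\Phi_{|M_1|}$ separates; you then claim $\Phi_{|M_1|}$ already distinguishes the generic irreducible elements. For $d_1\geq 2$ this is fine. But for $d_1=1$ you assert that distinct general fibers of $f$ map to distinct points of $\Gamma$, ``hence of $\Sigma$'', because $s$ is finite. Finiteness does not give injectivity: $s:\Gamma\rw\Sigma$ may a priori have degree $\geq 2$, in which case $\map_1$ identifies the pairs of fibers lying over $s$-conjugate points, and then no refinement argument can succeed --- a map that factors through $\map_1$ up to a fixed divisor can never separate two fibers that $\map_1$ glues together. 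This possibility is real when $b=g(\Gamma)>0$: there the moving part of $|K_{X'}|$ is $f^*$ of a \emph{complete} degree-$p$ system on $\Gamma$, and complete special systems on curves of positive genus need not separate points (e.g.\ a $g^1_2$ on a hyperelliptic $\Gamma$ gives $\deg(s)=2$). A symptom of the gap is that your argument never uses $m\geq 2$ in any essential way, so as written it would equally ``prove'' the statement for $m=1$, which is false precisely in this situation. (When $b=0$ your conclusion is salvageable, but only by an argument you did not give: $h^0(X',\OO_{X'}(M_1))=p_g$ together with $M_1\sim f^*\OO_{\bP^1}(p)$ forces $p=p_g-1$ and the induced system on $\bP^1$ to be the complete, very ample $|\OO_{\bP^1}(p_g-1)|$, so $s$ is an embedding.) What is missing for $b>0$ is exactly what the cited principles supply and what makes $m\geq 2$ enter essentially: for instance, applying Kawamata--Viehweg vanishing to a divisor numerically equivalent to $(m-1-\frac{2}{p})\pi^*(K_X)$ to produce a member of $|mK_{X'}|$ containing $F_2$ but not $F_1$ (and conversely), or pulling back a sufficiently ample multiple of the induced system on $\Gamma$.
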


We will frequently use the following base point freeness  due to \cite{Bom, C-C, Fr, Reider}:

\begin{fact} Let $S_0$ be a minimal projective surface of general type and $p_g(S_0)>0$. Then $|2K_{S_0}|$ is base point free.
\end{fact}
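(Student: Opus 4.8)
The plan is to prove directly that $\mathrm{Bs}\,|2K_{S_0}|=\emptyset$. Since $p_g(S_0)>0$, fix an effective canonical divisor $K_0\in|K_{S_0}|$. If $P\notin\mathrm{Bs}\,|K_{S_0}|$, then there is $D\in|K_{S_0}|$ with $P\notin\mathrm{Supp}(D)$, and $2D\in|2K_{S_0}|$ avoids $P$; therefore $\mathrm{Bs}\,|2K_{S_0}|\subseteq\mathrm{Bs}\,|K_{S_0}|$, and it suffices to treat a point $P$ lying on every canonical divisor. So I would fix a canonical curve $C\in|K_{S_0}|$ through $P$ and work on $C$.

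The key mechanism is adjunction: since $C\sim K_{S_0}$, one has $\mathcal{O}_{S_0}(2K_{S_0})|_C\cong\omega_C$, so the restriction sequence
$$0\rightarrow \mathcal{O}_{S_0}(K_{S_0})\rightarrow \mathcal{O}_{S_0}(2K_{S_0})\rightarrow \omega_C\rightarrow 0$$
reduces the non-vanishing of some section of $2K_{S_0}$ at $P$ to (i) the surjectivity of $H^0(2K_{S_0})\to H^0(\omega_C)$ and (ii) the global generation of $\omega_C$ at $P$. For (i), Kawamata–Viehweg vanishing gives $H^1(\mathcal{O}_{S_0}(2K_{S_0}))=0$, and the long exact sequence identifies the cokernel of the restriction with $H^1(\mathcal{O}_{S_0}(K_{S_0}))\cong H^1(\mathcal{O}_{S_0})^{\vee}$, of dimension $q(S_0)$; thus for a regular surface the restriction is surjective. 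For (ii), I would invoke that canonical divisors on a minimal surface of general type are numerically $2$-connected (Bombieri), and that the dualizing sheaf of such a Gorenstein curve of arithmetic genus $\ge 2$ is globally generated. Lifting the resulting section to $S_0$ then places a section of $2K_{S_0}$ not vanishing at $P$, settling the regular case.

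Independently, and covering all values of $p_g$ at once in the stable range, for $K_{S_0}^2\ge 5$ I would apply Reider's theorem with the nef and big divisor $L=K_{S_0}$. A base point $P$ of $|K_{S_0}+K_{S_0}|=|2K_{S_0}|$ would produce an effective $E\ni P$ with $(K_{S_0}\cdot E,\,E^2)=(0,-1)$ or $(1,0)$. The first is impossible: $K_{S_0}\cdot E=0$ forces every component of $E$ to be a $(-2)$-curve, so $E$ lies in a negative definite even lattice and $E^2$ is even. The second contradicts the parity of $2p_a(E)-2=E^2+K_{S_0}\cdot E=1$. Hence $|2K_{S_0}|$ is base point free whenever $K_{S_0}^2\ge 5$, with no restriction on $p_g$.

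The main obstacle is the residue: the irregular surfaces, where the cokernel in (i) has dimension $q>0$ and the induced subsystem of $|\omega_C|$ must still be shown base point free at $P$, together with the small range $K_{S_0}^2\le 4$ that Reider does not reach. By Noether's inequality $p_g\le \tfrac12 K_{S_0}^2+2$, the hypothesis $p_g>0$ confines the latter to finitely many numerical types, the genuinely delicate one being $K_{S_0}^2=1$ (so $p_g\in\{1,2\}$), where $|K_{S_0}|$ is a pencil with a single base point that $|2K_{S_0}|$ must resolve; this is exactly where $p_g>0$ is indispensable, the companion case $K_{S_0}^2=1$, $p_g=0$ being a genuine exception. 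For these finitely many residual types I would follow the explicit analysis in \cite{Bom, Fr, C-C}, verifying $2$-connectedness and the required global generation by hand to exclude the base point.
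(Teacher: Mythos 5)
Your proposal is correct in its checkable steps, but note first what the paper actually does: it gives \emph{no} proof of this Fact, stating it as a known result with the citation \cite{Bom, C-C, Fr, Reider} --- precisely the references you fall back on at the end. So your route is genuinely different in that it reconstructs most of the standard modern proof rather than quoting it. The pieces you supply are sound: the reduction $\mathrm{Bs}\,|2K_{S_0}|\subseteq\mathrm{Bs}\,|K_{S_0}|$; the adjunction sequence whose cokernel is $H^1(K_{S_0})\cong H^1(\mathcal{O}_{S_0})^{\vee}$, so regularity gives surjectivity of restriction; $2$-connectedness of canonical curves (Bombieri's lemma gives $1$-connectedness, and for any effective decomposition $K\sim A+B$ one has $A\cdot B=2p_a(A)-2-2A^2$ even, so $1$-connected forces $2$-connected), whence $\omega_C$ is globally generated; and the Reider case for $K_{S_0}^2\geq 5$, where $(K\cdot E,E^2)=(0,-1)$ dies because $(-2)$-curves span an even negative definite lattice and $(1,0)$ dies by parity of $2p_a(E)-2$. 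Two caveats. First, your description of the residue is slightly off: every minimal surface of general type with $K^2=1$ has $q=0$, so the $K^2=1$, $p_g\in\{1,2\}$ surfaces you flag as delicate are already covered by your own adjunction argument; what genuinely survives both of your arguments is the irregular range $q>0$, $2\leq K^2\leq 4$ (forcing $p_g\leq 2$, e.g.\ the $p_g=q=1$ surfaces that are exactly the subject of the cited Catanese--Ciliberto paper), and there your proof is a deferral to the literature rather than an argument --- which, to be fair, is the status of the paper's treatment of the entire Fact. Second, your claim that $K^2=1$, $p_g=0$ is a genuine exception is right: a base-point-free pencil would force $(2K)^2=0$, contradicting $4K^2=4$, so the hypothesis $p_g>0$ cannot be dropped. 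In sum: the paper's citation buys completeness (including the hard irregular cases) at zero expository cost; your argument buys transparency, isolating exactly where $p_g>0$ and regularity enter, at the cost of still leaning on \cite{Fr, C-C} for the finitely many irregular numerical types.
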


\begin{defn}\label{CF} Let $Z$ be a normal projective $\bQ$-Gorenstein variety. Let $\theta:Z'\rw Z$ be a birational morphism and $h:Z'\rw W$ be a fibration onto another normal variety $W$ with $\dim(W)=\dim(Z)-1$. Then we call ${\mathscr C}:=\{\theta(F)|F\ \text{is a fiber of}\ h\}$ a {\it curve family} on $Z$. For a general member $C\in {\mathscr C}$, the rational number $\deg({\mathscr C}):=(K_Z\cdot C)$ is referred to as {\it the canonical degree of} ${\mathscr C}$. \end{defn} 

Note that $\deg({\mathscr C})$ is independent of the birational morphism $\theta$ by the intersection theory.

\begin{defn} Let $S$ be a nonsingular projective surface. 
For a point $P\in S$, {\it $P$ is said to be very general} if $P$ lies in the complement of the union of countable curves on $S$.
\end{defn}

\begin{lem}\label{masek} Let $S$ be a nonsingular projective surface. Let ${\mathcal L}$ be a nef and big $\bQ$-divisor on $S$ satisfying the following conditions:
\begin{itemize}
\item[(1)] ${\mathcal L}^2>8$.
\item[(2)] $({\mathcal L}\cdot C_{P})\geq 4$ for all irreducible curves $C_{P}$ passing through any very general point $P\in S$.
\end{itemize}
Then the linear system $|K_S+\roundup{{\mathcal L}}|$ separates two distinct points in very general positions. Consequently it gives a birational map.
\end{lem}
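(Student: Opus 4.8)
\emph{Plan.} Since this is a separation-of-points statement for an adjoint linear system, the natural engine is the theory of multiplier ideals together with Nadel (Kawamata--Viehweg) vanishing, in the form used by Ein--Lazarsfeld \cite{E-L} and generalized by Ma\v{s}ek \cite{Masek}. Fix two distinct very general points $P,Q\in S$. To absorb the rounding, set $\Delta:=\roundup{{\mathcal L}}-{\mathcal L}\geq 0$; this is an effective $\bQ$-divisor all of whose coefficients lie in $[0,1)$. My goal is to produce an effective $\bQ$-divisor $D$ with $D\equiv c{\mathcal L}$ for some rational $0<c<1$, such that the multiplier ideal $\mathcal{J}(S,D)$ coincides, near $P$ and near $Q$, with the ideal of the reduced scheme $\{P,Q\}$; equivalently, $P$ and $Q$ are isolated points of the non-klt locus of $(S,D)$. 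Granting this, put $B:=D+\Delta$ and write $K_S+\roundup{{\mathcal L}}=K_S+B+(1-c){\mathcal L}$. Here $\roundup{{\mathcal L}}-B\equiv(1-c){\mathcal L}$ is genuinely nef and big, so Nadel vanishing gives $H^1\!\big(S,\OO_S(K_S+\roundup{{\mathcal L}})\ot\mathcal{J}(S,B)\big)=0$. As $P,Q$ are very general they avoid $\mathrm{Supp}\,\Delta$, so $\mathcal{J}(S,B)=\mathcal{J}(S,D)$ near them, and the resulting surjection $H^0(S,K_S+\roundup{{\mathcal L}})\twoheadrightarrow H^0(\OO_{\{P,Q\}})$ separates $P$ from $Q$. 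Letting $P,Q$ vary over very general points then forces $\Phi_{|K_S+\roundup{{\mathcal L}}|}$ to be generically injective, hence birational onto its image. Folding the fractional part $\Delta$ into the boundary in this way is exactly what makes the nef-and-big hypothesis of Nadel vanishing hold despite the round-up, and is the point flagged in the introduction.

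\emph{Construction of $D$.} Condition (1) is used to build an effective $\bQ$-divisor numerically equivalent to ${\mathcal L}$ that is singular at both points. Since ${\mathcal L}$ is big, $h^0(S,\OO_S(k{\mathcal L}))\geq \tfrac{k^2}{2}{\mathcal L}^2-O(k)$ for large and sufficiently divisible $k$, whereas imposing vanishing of order $\geq 2k$ at each of $P$ and $Q$ costs at most $2\binom{2k+1}{2}=4k^2+O(k)$ linear conditions. Because ${\mathcal L}^2>8$ we have $\tfrac12{\mathcal L}^2>4$, so for $k\gg0$ a nonzero such section exists; dividing its divisor by $k$ yields an effective $D_0\equiv{\mathcal L}$ with $\mathrm{mult}_P D_0>2$ and $\mathrm{mult}_Q D_0>2$, the strict inequalities being afforded by the strictness in ${\mathcal L}^2>8$. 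Scaling, $D:=(1-\eps)D_0\equiv(1-\eps){\mathcal L}$ is effective with $0<1-\eps<1$ and, for $\eps$ small, still has multiplicity $>2$ at $P$ and $Q$; hence both points lie in the non-klt locus of $(S,D)$.

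\emph{Isolating $P$ and $Q$ — the main obstacle.} The genuinely delicate point is that the non-klt locus of $(S,D)$ may acquire a $1$-dimensional component $C$ through $P$ (resp. $Q$), in which case $\mathcal{J}(S,D)$ is too small near that point and the surjection above no longer separates. This is precisely where condition (2) and the \emph{very general} position of $P,Q$ enter: any irreducible curve $C$ lying in the non-klt locus and passing through the very general point $P$ must move in a family sweeping out $S$, so it is one of the curves $C_P$ of the hypothesis and satisfies $({\mathcal L}\cdot C)\geq 4$. One then runs the standard tie-breaking/perturbation of Ein--Lazarsfeld: subtract a suitable multiple of $C$ from $D$ and add back a small general effective divisor $\equiv\delta{\mathcal L}$ to restore a boundary $\equiv c'{\mathcal L}$ with $c'<1$ whose non-klt locus near $P$ has dropped in dimension. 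The inequality $({\mathcal L}\cdot C)\geq 4\ (=2\cdot 2)$ is exactly what guarantees that the required multiplicity $>2$ at $P$ survives the removal of $C$, so that $P$ remains an isolated non-klt point. Iterating at $P$ and $Q$ cuts the non-klt locus down to the two isolated points, and packaging this bookkeeping into a single clean statement is the content of Ma\v{s}ek's theorem \cite{Masek}; applying it with the two numerical inputs ${\mathcal L}^2>8$ and $({\mathcal L}\cdot C_P)\geq 4$ yields a $D$ with $\mathcal{J}(S,D)$ locally equal to the ideal of $\{P,Q\}$, which completes the argument via the vanishing of the first paragraph. Everything else reduces to Riemann--Roch estimates and standard vanishing; the only real obstacle is this control of the $1$-dimensional part of the non-klt locus.
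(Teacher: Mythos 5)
Your proposal is correct and is essentially the paper's own route: the paper proves this lemma by directly invoking Masek's Proposition 4 (the Ein--Lazarsfeld multiplier-ideal method) with the parameters $\mu_p=\mu_q=0$ (very general points avoid the fractional support) and $\beta_{1,p}=\beta_{1,q}=\beta_{2,p}=\beta_{2,q}=2$, remarking only that Masek's ampleness hypothesis may be relaxed to nef and big since one needs separation of very general points (hence birationality) rather than very ampleness. Your reconstruction of the internals --- the Riemann--Roch count from ${\mathcal L}^2>8$, Nadel vanishing after absorbing $\roundup{{\mathcal L}}-{\mathcal L}$ into the boundary, and the curve case governed by $({\mathcal L}\cdot C_P)\geq 4$ --- is precisely the bookkeeping that the paper delegates wholesale to Masek, so the two arguments coincide in substance.
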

\begin{proof} This is a direct result from the proof of Masek \cite[Proposition 4]{Masek}. We keep the same notation there. Let $p,q$ be two distinct very general points on $S$. Then we are in the situation $\mu_p=\mu_q=0$. Just set $\beta_{1,p}=\beta_{1,q}=2$ and $\beta_{2,p}=\beta_{2,q}=2$. Then our situation here fits into all numerical requirements there and, as a result, the proof follows. Note, however, Masek's condition of ``M being ample'' is set to secure the local positivity at every points in order to obtain base point freeness and very ampleness. To obtain birationality, the ``nef and big'' condition is sufficient. \end{proof}

\begin{lem}\label{RR} Let $\pi:X'\rw X$ be a birational morphism from a nonsingular model $X'$ onto $X$ which is a minimal projective 3-fold of general type. Assume $f:X'\rw \bP^1$ be a fibration with the general fiber $F$. Let $\sigma: F\rw F_0$ be the birational contraction onto the minimal model. Set $\tau_0:=\frac{K_X^3}{3K_{F_0}^2}$. Then
\begin{itemize}
\item[(i)] For any rational number $\delta>0$, there are 
two positive integers $N$ and $n$ such that $\frac{n}{N}=\tau_0-\delta$ and that
$$N\pi^*(K_X)\genum nF.$$
\item[(ii)] For any small rational number $\eps_0$, there exists an effective $\bQ$-divisor $J_{\eps_0}$ such that
$$\pi^*(K_X)|_F\equiv (\frac{\tau_0}{\tau_0+1}-\eps_0)\sigma^*(K_{F_0})+J_{\eps_0}.$$
\end{itemize}
\end{lem}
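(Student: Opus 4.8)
Write $L:=\pi^*(K_X)$, a nef and big $\bQ$-divisor on $X'$ with $L^3=K_X^3$. Since $F$ is a general fibre of $f:X'\rw\bP^1$, two general fibres are disjoint, so $F^2\equiv 0$ as a $1$-cycle and hence $L\cdot F^2=F^3=0$; moreover the normal bundle of $F$ is trivial, so adjunction gives $K_F=K_{X'}|_F=L|_F+E_\pi|_F$, where $E_\pi|_F\geq 0$ is the restriction of the effective, $\pi$-exceptional divisor $E_\pi$. The first thing I would record is the surface inequality $(L|_F)^2\leq K_{F_0}^2$. Indeed $L|_F$ is nef, so $\Vol(L|_F)=(L|_F)^2$, while $K_F-L|_F=E_\pi|_F$ is effective, hence pseudo-effective; by monotonicity of the volume and $\Vol(K_F)=K_{F_0}^2$ the claim follows. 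Equivalently $L^2\cdot F=(L|_F)^2\leq K_{F_0}^2$, which is exactly what makes $\tau_0=\frac{K_X^3}{3K_{F_0}^2}$ the right threshold below.

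For (i) the plan is to show that $L-tF$ is big for every rational $0<t<\tau_0$ by an asymptotic section count, peeling off one fibre at a time. From the restriction sequence
\[ 0\rw\OO_{X'}(mL-jF)\rw\OO_{X'}(mL-(j-1)F)\rw\OO_F\big(mL|_F\big)\rw 0 \]
(using $F|_F\equiv 0$) one gets $h^0(mL-(j-1)F)-h^0(mL-jF)\leq h^0(F,mL|_F)\leq\frac{m^2}{2}(L|_F)^2+O(m)$. Summing over $j=1,\dots,\lfloor mt\rfloor$ and inserting $h^0(mL)=\frac{m^3}{6}K_X^3+O(m^2)$ (asymptotic Riemann--Roch for the nef and big $L$) yields $h^0(mL-\lfloor mt\rfloor F)\geq\frac{m^3}{6}\big(K_X^3-3t(L|_F)^2\big)+O(m^2)$. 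Since $(L|_F)^2\leq K_{F_0}^2$, the leading coefficient satisfies $K_X^3-3t(L|_F)^2\geq 3K_{F_0}^2(\tau_0-t)>0$ for $t<\tau_0$, so $h^0(mL-\lfloor mt\rfloor F)>0$ for $m\gg 0$. Given $\delta>0$, I take $t=\tau_0-\delta$ and $m$ divisible enough that $mt\in\bZ$; setting $N=m$, $n=mt$ produces an effective member of $|NL-nF|$, i.e. $N\pi^*(K_X)\genum nF$ with $\frac{n}{N}=\tau_0-\delta$.

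For (ii) I would feed (i) back into adjunction. Put $\mu:=\tau_0-\delta$, so that $\pi^*(K_X)\genum\mu F$ on $X'$ by (i), and write $\pi^*(K_X)|_F=K_F-E_\pi|_F=\sigma^*(K_{F_0})+(E_\sigma-E_\pi|_F)$. The entire content is to control the restricted discrepancy against $\pi^*(K_X)|_F$; precisely, the canonical restriction technique of Chen--Zhang \cite[Lemma 3.7]{MZ} should convert the fibre-direction bound $\pi^*(K_X)\genum\mu F$ into the fibrewise estimate $\pi^*(K_X)|_F\genum\mu\,E_\pi|_F$. Granting this, the algebra is immediate: $(\mu+1)\,\pi^*(K_X)|_F=\mu\,\pi^*(K_X)|_F+\pi^*(K_X)|_F\genum \mu\big(\pi^*(K_X)|_F+E_\pi|_F\big)=\mu K_F\genum\mu\,\sigma^*(K_{F_0})$, whence $\pi^*(K_X)|_F\genum\frac{\mu}{\mu+1}\sigma^*(K_{F_0})$. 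Since $\frac{\mu}{\mu+1}\to\frac{\tau_0}{\tau_0+1}$ as $\delta\to 0$, for any prescribed $\eps_0$ I pick $\delta$ small enough that $\frac{\mu}{\mu+1}>\frac{\tau_0}{\tau_0+1}-\eps_0$, and define $J_{\eps_0}:=\pi^*(K_X)|_F-\big(\frac{\tau_0}{\tau_0+1}-\eps_0\big)\sigma^*(K_{F_0})$, which is then effective.

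The main obstacle is the estimate $\pi^*(K_X)|_F\genum\mu\,E_\pi|_F$ underlying (ii). Note that merely restricting any relation of the shape $N\pi^*(K_X)\genum nF$ to $F$ is vacuous, because $F|_F\equiv 0$ forces such restrictions to become tautologies; the divisor $\sigma^*(K_{F_0})$ can only enter through $K_{X'}|_F=K_F$, so what is really needed is a genuine upper bound on the restricted exceptional part $E_\pi|_F$ in terms of $\pi^*(K_X)|_F$. This is exactly the quantitative input I would either cite from \cite{MZ} or reprove via Kawamata--Viehweg vanishing applied to $K_{X'}+\roundup{\pi^*(K_X)-\mu F}$ along $F$, and it is where the factor $\tfrac{\mu}{\mu+1}$ is forced. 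By contrast, part (i) is routine once the surface inequality $(L|_F)^2\leq K_{F_0}^2$ of the first paragraph is in hand.
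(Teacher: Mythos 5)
Your part (i) is correct and is essentially the paper's own proof: both arguments peel off fibres one at a time and compare the cubic growth $h^0(X',m\pi^*(K_X))\sim\frac{1}{6}K_X^3m^3$ against a per-fibre loss of order $\frac{1}{2}K_{F_0}^2m^2$; whether that loss is bounded via volume monotonicity, $(L|_F)^2=\Vol(L|_F)\le\Vol(K_F)=K_{F_0}^2$, as you do, or directly by $\dim V_{m,t}\le P_m(F)\approx\frac{1}{2}K_{F_0}^2m^2$, as the paper does, is immaterial.

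Part (ii), however, has a genuine gap, and it sits exactly at the step you flag as ``the main obstacle'': the intermediate estimate $\pi^*(K_X)|_F\genum\mu\,E_\pi|_F$ is not merely unproven, it is false in general. Here $\pi^*(K_X)|_F$ and $E_\pi|_F$ are \emph{fixed} divisors on $F$, while $\mu=\tau_0-\delta$ must be taken close to $\tau_0$. A class numerically equivalent to an effective divisor has non-negative degree against any ample $H$ on $F$, so your estimate forces
$$\mu\,(E_\pi|_F\cdot H)\ \le\ (\pi^*(K_X)|_F\cdot H),$$
which fails once $E_\pi|_F\neq 0$ and $\mu>(\pi^*(K_X)|_F\cdot H)/(E_\pi|_F\cdot H)$. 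Note that $E_\pi|_F\neq 0$ whenever $F$ is not minimal (otherwise $K_F=K_{X'}|_F=\pi^*(K_X)|_F$ would be nef), and the paper applies this very lemma to non-minimal fibres ($(1,2)$-, $(2,3)$-, $(1,1)$-surface fibrations) with $\tau_0$ large --- up to $\tau_0>200$ in Section 6. The structural reason for the failure is visible if you add $\mu\,\pi^*(K_X)|_F$ to both sides: your claim is equivalent to $\pi^*(K_X)|_F\genum\frac{\mu}{\mu+1}K_F=\frac{\mu}{\mu+1}\bigl(\sigma^*(K_{F_0})+E_\sigma\bigr)$, i.e.\ a lower bound against all of $K_F$, including its negative part $E_\sigma$, whereas the nef divisor $\pi^*(K_X)|_F$ can only dominate multiples of the nef part $\sigma^*(K_{F_0})$ (this is what statement (ii) asserts, and it is the most one can hope for). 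The Kawamata--Viehweg patch you propose cannot repair this: $\pi^*(K_X)-\mu F$ need not be nef and big, and no vanishing theorem will prove an inequality that is false.

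The underlying misstep is a misreading of \cite[Lemma 3.7]{MZ}. That lemma does not convert $m_0\pi^*(K_X)\genum pF$ into a bound against $E_\pi|_F$; its conclusion is literally statement (ii): there is an effective $\bQ$-divisor $H_{\eps}$ with $\pi^*(K_X)|_F\equiv(\frac{p}{m_0+p}-\eps)\sigma^*(K_{F_0})+H_{\eps}$, with the nef divisor $\sigma^*(K_{F_0})$ --- and only it --- on the right. This is why the paper's proof of (ii) is a one-line citation: by part (i) one may take $p/m_0$ arbitrarily close to $\tau_0$, and then $\frac{p}{m_0+p}\to\frac{\tau_0}{\tau_0+1}$. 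If you want to avoid quoting that lemma as a black box, what has to be reproved is its extension-type mechanism (in the spirit of Lemma \ref{_*}: gluing relative pluricanonical sections along $F$ via semipositivity, so that multiples of $\sigma^*(K_{F_0})$ appear inside restricted pluricanonical systems); your purely numerical manipulation of $K_F=\pi^*(K_X)|_F+E_\pi|_F$ cannot substitute for it.
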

\begin{proof} (1) For any sufficiently large and divisible integer $m>0$ (such that $m$ is divisible by the Cartier index of $X$), the Riemann-Roch on $X'$ implies
$$P_m(X')=h^0(X', m\pi^*(K_X))\approx \frac{1}{6}K_X^3m^3.$$
On the other hand, the Riemann-Roch on $F$ gives
$$P_m(F)\approx\frac{1}{2}K_{F_0}^2m^2.$$
Therefore $P_m(X')>(\tau_0-\delta)mP_m(F)$ for $m\gg 0$. Consider the restriction maps:
$$H^0(X', M_m-tF)\overset{\theta_t}\lrw V_{m,t}\subset H^0(F, mK_F)$$ where $t\geq 0$ and $V_{m,t}$ is the image vector space. Since $\dim (V_{m,t})\leq P_m(F)$ for all 
$t$, we have 
$$m\pi^*(K_X)-(\tau_0-\delta)mF\geq M_m-(\tau_0-\delta)mF>0$$ for all large and divisible integers $m$ (such that $(\tau_0-\delta)m$ is integral). Pick a large such integer $m=l_0$ and set $N:=l_0$ while $n:=(\tau_0-\delta)l_0$. So we get (i).  

(2) Statement (ii) follows directly from our canonical restriction inequality \cite[Lemma 3.7]{MZ} since $\frac{p}{m_0}\mapsto \tau_0$ in this situation. We are done.
\end{proof}

\begin{lem}\label{rr} Let $S$ be a nonsingular projective surface on which there is a nef and big $\bQ$-divisor $L$ and a smooth curve $C$ with $(L\cdot C)>0$. Set $\nu_0:=\frac{L^2}{2(L\cdot C)}$. Then
$$L\genum (\nu_0-\delta_0)C$$ 
for all very small rational numbers $\delta_0>0$.
\end{lem}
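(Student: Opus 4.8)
The plan is to reduce this statement about numerical effectivity to a statement about \emph{bigness}, which is easier to certify by intersection numbers. Indeed, $\genum$ only requires that $L-(\nu_0-\delta_0)C$ be numerically equivalent to an effective $\bQ$-divisor, and any big $\bQ$-divisor is $\bQ$-linearly equivalent (hence numerically equivalent) to an effective one. So it suffices to prove that
\[
D_{\delta_0}:=L-(\nu_0-\delta_0)C
\]
is a big $\bQ$-divisor for every sufficiently small rational $\delta_0>0$.

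To detect bigness I would run two intersection-number checks and then invoke the Hodge index theorem. First, using $\nu_0=\frac{L^2}{2(L\cdot C)}$, one finds
\[
D_{\delta_0}\cdot L=L^2-(\nu_0-\delta_0)(L\cdot C)=\tfrac12 L^2+\delta_0(L\cdot C)>0 ,
\]
since $L$ is big and $(L\cdot C)>0$. Second,
\[
D_{\delta_0}^2=L^2-2(\nu_0-\delta_0)(L\cdot C)+(\nu_0-\delta_0)^2C^2=\nu_0^2C^2+2\delta_0(L\cdot C)-(2\nu_0\delta_0-\delta_0^2)C^2 .
\]
When $C^2\geq 0$ the dominant term $\nu_0^2C^2$ is nonnegative while $2\delta_0(L\cdot C)$ is strictly positive, so $D_{\delta_0}^2>0$ for all small $\delta_0>0$; note that for $C^2=0$ this reads $D_{\delta_0}^2=2\delta_0(L\cdot C)$, which is exactly where the hypothesis $\delta_0>0$ becomes indispensable.

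I would then conclude via the signature $(1,\rho-1)$ of the intersection form: the locus $\{x:x^2>0\}$ has two connected components, and $D_{\delta_0}^2>0$ together with $D_{\delta_0}\cdot L>0$ places $D_{\delta_0}$ in the same component as the big nef class $L$, namely the component contained in the big cone. Hence $D_{\delta_0}$ is big, and the lemma follows. A parallel route, closer to the proof of Lemma \ref{RR}, is to take $m\gg 0$ divisible so that $T:=m(\nu_0-\delta_0)\in\bZ$, use $h^0(S,mL)=\tfrac12 L^2m^2+O(m)$ (Kawamata--Viehweg plus Riemann--Roch), and peel off copies of $C$ through the restrictions $H^0(S,mL-tC)\rw H^0(C,(mL-tC)|_C)$, bounding each defect by $\deg_C(mL-tC)+1$; summing and comparing the $m^2$-coefficients forces $h^0(S,mL-TC)>0$, with $\nu_0$ the exact balance point and $\delta_0>0$ supplying the strict gap.

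The step I expect to be the real obstacle is the sign of $C^2$. Both arguments above use $C^2\geq 0$: in the bigness route it guarantees $D_{\delta_0}^2>0$, while in the restriction route it keeps $\deg_C(mL-tC)=m(L\cdot C)-tC^2$ nonnegative for $t\leq T$ (one checks via the Hodge index theorem that $(L\cdot C)^2\geq L^2C^2$, hence $\nu_0<\frac{(L\cdot C)}{C^2}$ when $C^2>0$, so the degrees stay positive). If $C^2<0$ the self-intersection $D_{\delta_0}^2$ may turn negative and the restriction degrees grow with $t$, so neither estimate closes and one would need additional input; this case is harmless in the applications, where $C$ is always a generic irreducible element of a movable system and therefore satisfies $C^2\geq 0$.
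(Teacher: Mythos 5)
Your proposal is correct (granted $C^2\ge 0$, on which more below), but your primary argument is genuinely different from the paper's. The paper proves the lemma by exactly your ``parallel route'': for $m\gg 0$ sufficiently divisible, $h^0(S,mL)\approx \frac{1}{2}L^2m^2$ while $h^0(C,mL|_C)\approx (L\cdot C)m$, and peeling off copies of $C$ through the restriction maps $H^0(S,mL-tC)\rw H^0(C,(mL-tC)|_C)$ forces $h^0(S,mL-TC)>0$ for $T=(\nu_0-\delta_0)m$; the paper states this in two lines and omits precisely the details you supply. Your main route instead certifies that $D_{\delta_0}=L-(\nu_0-\delta_0)C$ is \emph{big} from two intersection numbers, $D_{\delta_0}\cdot L=\frac{1}{2}L^2+\delta_0(L\cdot C)>0$ and $D_{\delta_0}^2=2\delta_0(L\cdot C)+(\nu_0-\delta_0)^2C^2>0$, together with the signature $(1,\rho-1)$ of the intersection form (two classes of positive self-intersection lie in the same component of the positive cone if and only if their product is positive, and the component containing the nef and big class $L$ consists of big classes). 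Both computations check out. What your route buys: no asymptotic Riemann--Roch, a sharper conclusion ($D_{\delta_0}$ is big, not merely numerically equivalent to an effective $\bQ$-divisor), and a transparent accounting of where $\delta_0>0$ enters. What the paper's counting route buys: it is the same mechanism as Lemma \ref{RR}, where the ambient variety is a 3-fold and no Hodge-index/signature argument of this kind is available, so the paper gets both lemmas from one template.

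Your closing remark on the sign of $C^2$ is not a defect of your proof but a genuine implicit hypothesis of the lemma: as literally stated, the claim is false when $C^2<0$. For instance, on the blow-up $S$ of $\bP^2$ at a point, with $H$ the pullback of a line and $E$ the exceptional curve, take $L=2H-E$ (nef and big, $L^2=3$) and $C=E$, so $(L\cdot C)=1$ and $\nu_0=\frac{3}{2}$; then $L-(\frac{3}{2}-\delta_0)E=2(H-E)+(\delta_0-\frac{1}{2})E$ is not pseudoeffective for $\delta_0<\frac{1}{2}$, since the pseudoeffective cone of $S$ is spanned by $E$ and $H-E$. The paper's own counting proof tacitly needs $C^2\ge 0$ as well (otherwise the degrees $m(L\cdot C)-tC^2$ grow with $t$ and the estimate does not close), and in every application in the paper $C$ is a generic irreducible element of a movable family, so $C^2\ge 0$ holds there. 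So your flag identifies a hypothesis that should be added to the statement, and your proof is complete in exactly the generality in which the lemma is used.
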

\begin{proof} This is similar to Lemma \ref{RR}. For a very large and divisible integer $m$, we have
$$h^0(S, mL)\approx \frac{1}{2}L^2m^2$$ and 
$h^0(C, mL|_C)\approx (L\cdot C)m$. Then the statement follows by simply considering the restriction maps:
$$H^0(S, mL)\lrw H^0(C, mL|_C).$$
We omit other details. 
\end{proof}

The following result is also frequently used to distinguish curves on surfaces. 

\begin{lem}\label{_*} Let $\pi:X'\rw X$ be a birational morphism from nonsingular projective 3-fold $X'$ onto a minimal 3-fold $X$ of general type. Let $f:X'\rw\bP^1$ be a fibration. Assume $\OO(p)\hrw f_*\omega_{X'}$ for some integer $p>0$. Then, for all $t>0$ with $|tp\sigma^*(K_{F_0})|$ base point free, one has
$$t(p+2)\pi^*(K_X)|_F\geq tp\sigma^*(K_{F_0})$$
where $F$ is a general fiber of $f$ and $\sigma: F\rw F_0$ is the contraction onto the minimal model.
\end{lem}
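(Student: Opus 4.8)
The plan is to extract from the hypothesis a purely \emph{vertical} effective bound on $X'$ and then to convert it into a \emph{horizontal} bound along the general fibre $F$; the factor $t$ together with the base-point-freeness of $|tp\sigma^*(K_{F_0})|$ will only serve to clear denominators and to select good members.

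First I would translate the inclusion $\OO(p)\hrw f_*\omega_{X'}$ into an effective divisor. By the adjunction $\mathrm{Hom}_{\bP^1}(\OO(p),f_*\omega_{X'})=\mathrm{Hom}_{X'}(f^*\OO(p),\omega_{X'})=H^0(X',K_{X'}-pF)$, the given nonzero map produces an effective divisor $D\genum 0$ with $K_{X'}\sim pF+D$, i.e. $K_{X'}\genum pF$. Twisting by $\omega_{\bP^1}^{-1}=\OO(2)$ this is the same datum as $\OO(p+2)\hrw f_*\omega_{X'/\bP^1}$, and on the level of the relative dualising sheaf it reads $K_{X'}+2F\genum (p+2)F$. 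This is where the shift ``$+2$'' is born, and it is the only place the number $p+2$ enters.

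The heart of the argument is to turn the vertical estimate $K_{X'}+2F\genum (p+2)F$ into a horizontal estimate on $F$. Writing $K_F=\sigma^*(K_{F_0})+E_\sigma$ with $E_\sigma\geq 0$ the discrepancy and recalling $K_{X'}|_F=K_F$, I would invoke the canonical restriction inequality of Chen--Zhang \cite[Lemma 3.7]{MZ} exactly as in the proof of Lemma \ref{RR}(ii), feeding in the relative bound above so that, after the bookkeeping of $E_{\pi}$ discussed below, its output is a lower bound of the shape $\pi^*(K_X)|_F\genum \tfrac{p}{p+2}\sigma^*(K_{F_0})$ up to an arbitrarily small error. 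Equivalently one may reconstruct this step by hand: since $f_*(\omega_{X'/\bP^1}^{\otimes m})$ is semipositive on $\bP^1$, hence a sum of $\OO(a_i)$ with $a_i\geq 0$ and therefore globally generated, cohomology and base change make $H^0(X',m(K_{X'}+2F))\rw H^0(F,mK_F)$ surjective; combining this with Kawamata--Viehweg vanishing for the nef and big $\bQ$-divisor $t(p+2)\pi^*(K_X)$ lets one control the defect $E_{\pi}|_F$ and realise the numerical bound as the honest effective inequality $t(p+2)\pi^*(K_X)|_F\genum tp\sigma^*(K_{F_0})$. The round-ups become integral and the chosen members smooth precisely because $|tp\sigma^*(K_{F_0})|$ is base point free.

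The main obstacle is exactly this conversion step. A naive restriction of $K_{X'}\genum pF$ to a general fibre is useless, since $pF|_F\equiv 0$ annihilates all the vertical positivity and every such manipulation collapses to the tautology $K_F\genum 0$. The genuine content lies in transporting global vertical positivity to $F$ while paying for the difference $K_{X'}-\pi^*(K_X)=E_{\pi}$: because $\pi^*(K_X)|_F=\sigma^*(K_{F_0})+E_\sigma-E_{\pi}|_F$ is only nef, nefness by itself does not render $2\sigma^*(K_{F_0})+(p+2)(E_\sigma-E_{\pi}|_F)$ effective, so one really needs the positivity package (semipositivity of the Hodge-type bundle plus vanishing) to absorb $E_{\pi}|_F$. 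The replacement of the optimal denominator $p+1$ by $p+2$ is precisely the price of routing the estimate through the relative dualising sheaf $K_{X'}+2F$, which is what keeps the argument self-contained.
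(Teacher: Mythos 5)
Your second, ``by hand'' route is in substance the paper's own argument: the hypothesis gives $K_{X'}\sim pF+D$ with $D\geq 0$ effective, the semi-positivity theorem makes $f_*\omega_{X'/\bP^1}^{\otimes tp}$ a direct sum of $\OO_{\bP^1}(a_i)$ with $a_i\geq 0$, hence globally generated, so every section of $tpK_F$ on a general fibre extends to a global section of $tpK_{X'/\bP^1}$. But your endgame has a genuine gap: Kawamata--Viehweg vanishing is neither used in the paper nor usable here to ``control the defect $E_{\pi}|_F$'' --- vanishing produces no mechanism for comparing divisors with $\pi^*(K_X)$. What is actually needed is elementary: multiply each extended section by the canonical section of $2tD$, so that
$$|t(p+2)K_{X'}||_F\ \supset\ |tpK_F|+2tD|_F\ \supset\ |tp\sigma^*(K_{F_0})|+(tpE_{\sigma}+2tD|_F),$$
where $K_F=\sigma^*(K_{F_0})+E_{\sigma}$ and the base point freeness of $|tp\sigma^*(K_{F_0})|$ is used exactly at this point, to identify the moving part of $|tpK_F|$ with $|tp\sigma^*(K_{F_0})|$ (not, as you say, to ``make round-ups integral''). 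Hence the moving part $M_{t(p+2)}$ of $|t(p+2)K_{X'}|$ satisfies $M_{t(p+2)}|_F\geq tp\sigma^*(K_{F_0})$; combined with the standard fact from the setting of Section 2 that $M_{t(p+2)}\leq t(p+2)\pi^*(K_X)$ --- sections of $t(p+2)K_{X'}$ descend to the minimal model $X$, so the $\pi$-exceptional divisors sit inside the fixed part --- restriction to a general $F$ gives the asserted inequality. No vanishing theorem enters.

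Second, your claim that this is ``equivalent'' to invoking the canonical restriction inequality \cite[Lemma 3.7]{MZ} is incorrect, and that first route cannot prove the lemma as stated. That inequality only yields a numerical statement with loss, $\pi^*(K_X)|_F\genum (\frac{p}{p+1}-\eps)\sigma^*(K_{F_0})$, and correspondingly your own conclusion is written with ``$\genum$''; but Lemma \ref{_*} asserts an exact inequality of divisors, namely that $t(p+2)\pi^*(K_X)|_F-tp\sigma^*(K_{F_0})$ is an honest effective divisor, with no $\eps$. This distinction is the whole point of the lemma: in its applications (Claim \ref{2/3}.2, where ``$3\pi^*(K_X)|_F\geq C$'' is fed into a containment of linear systems, and Claims \ref{R2}.2, \ref{R2}.3, \ref{pg3d1}.2, \ref{41}.3, where $\map_m|_F$ must distinguish different members of $|G|$) one needs genuine sections of $|t(p+2)K_{X'}|$ restricting to the full base-point-free system $|tp\sigma^*(K_{F_0})|$ modulo a fixed divisor; a numerical inequality, however good its coefficient, provides no sections at all.
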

\begin{proof} By assumption, one has
$$f_*\omega_{X'/\bP^1}^{\otimes tp}\hrw f_*\omega_{X'}^{\otimes t(p+2)}.$$
By the semi-positivity theorem for $f_*\omega_{X'/\bP^1}^{\otimes tp}$, we see that $f_*\omega_{X'/\bP^1}^{\otimes tp}$ is generated by global sections. Thus the local sections along the general fiber $F$ can be glued into some global sections of $f_*\omega_{X'}^{\otimes t(p+2)}$. This implies 
\begin{eqnarray*}
|t(p+2)K_{X'}||_F&\supset&|tp\sigma^*(K_{F_0})|_{\text{mov}}+\text{(fixed divisor)}\\
&=&|tp\sigma^*(K_{F_0})|+\text{(fixed divisor)}
\end{eqnarray*}
since $|tp\sigma^*(K_{F_0})|$ is base point free. Or, in divisor language, one has 
$$t(p+2)\pi^*(K_X)|_F\geq M_{t(p+2)}|_F\geq tp\sigma^*(K_{F_0})$$
where $|M_{t(p+2)}|$ is the moving part of $|t(p+2)K_{X'}|$. 
\end{proof}

\section{\bf Proof of Theorem \ref{m1} (Part 1)}

\subsection{\bf Characterization of the birationality of $\map_7$}\label{2/3}\ 

We start with the proof of Theorem \ref{m1}(1). Since $p_g(X)\geq 2$, we have an induced fibration $f:X'\rw \Gamma$. 

Assume $\map_7$ is non-birational. Then, by \cite[Theorem 1.2 and Section 4]{IJM}, one has $p_g(X)=2$, $\Gamma\cong \bP^1$ and a general fiber $F$ of $f$ is a (1,2) surface. We show the existence of the curve family ${\mathscr C}$ as claimed in the statement. Pick a general fiber $F$ and take $|G|$ to be the moving part of $|K_F|$. Take further necessary birational modifications to $\pi$ such that (and thus may assume) the relative canonical map of $f$ is a morphism. By the surface theory (see \cite{BPV}), a generic irreducible element $C$ of $|G|$ is a smooth curve of genus 2. We have $m_0=1$, $p=1$ and \cite[Lemma 3.7]{MZ} implies $\beta\mapsto \frac{1}{2}$. We have already known $\xi:=(\pi^*(K_X)\cdot C)\geq \frac{3}{5}$ in \cite[Section 4]{IJM}. Take $m=6$. Since $\alpha_6\geq 2\xi>1$, Theorem \ref{CZkey} implies $\xi\geq \frac{2}{3}$. Now $\xi>\frac{2}{3}$ is impossible since, otherwise, $\alpha_7\geq 3\xi>2$ and $\map_7$ will be birational by \cite[Theorem 3.6]{MZ}, a contradiction. Thus $\xi=\frac{2}{3}$. Take ${\mathscr C}:=\{\pi(C_t)|C_t\ \text{is a fibre of the relative canonical map of}\ f\}$. Clearly  $\deg({\mathscr C})=\xi=\frac{2}{3}$ by the projection formula. 

Conversely, assume $p_g(X)=2$ and there is a genus 2 curve family ${\mathscr C}$ of canonical degree $\frac{2}{3}$. We study the general fiber $F$ of the canonically induced fibration $f: X'\rw \Gamma$. 
\medskip

{\bf Claim \ref{2/3}.1.} $\Gamma\cong \bP^1$ and $F$ must be a (1,2) surface.
\begin{proof} Modulo further birational modifications, we may assume that the curve family ${\mathscr C}$ is free on $X'$. Pick a general curve $\hat{C}\subset X'$ such that $\pi(\hat{C})$ dominates a general curve in ${\mathscr C}$ on $X$. {}First, we see $\hat{C}\subset F$ for some general fiber $F$. Otherwise, $f(\hat{C})=\Gamma$, $h^0(\hat{C}, F|_{\hat{C}})\geq 2$ and then $(\pi^*(K_X)\cdot \hat{C})\geq \deg(F|_{\hat{C}})\geq 2$ which contradicts to the assumption. Secondly, if $g(\Gamma)>0$, then we have $\pi^*(K_X)|_F\cong \sigma^*(K_{F_0})$ by \cite[Lemma 4.7]{MZ} and 
$$(\pi^*(K_X)\cdot \hat{C})=(\pi^*(K_X)|_F\cdot \hat{C})
=(\sigma^*(K_{F_0})\cdot \hat{C})\geq 1$$
since $\hat{C}$ is moving in a family on $F$, which is again a contradiction. Thus we have seen $\Gamma\cong \bP^1$. 

Now we study the numerical type of the general fiber $F$. 
We have 
$$\frac{2}{3}=(\pi^*(K_X)|_F\cdot \hat{C})\geq \frac{1}{2}(\sigma^*(K_{F_0})\cdot \hat{C}),$$
which implies $(\sigma^*(K_{F_0})\cdot \hat{C})=1$. Observing that $\hat{C}$ is moving, the Hodge index theorem and the assumption $g(\hat{C})=2$ imply $K_{F_0}^2=1$.  By the surface theory and the proof of \cite[Claim 2.14]{MA}, $F$ must be a (1,2) surface and $\hat{C}$ is the moving part of $|K_F|$. So we have $\hat{C}=C$ as a general member of $|G|$ on $F$. \end{proof} 
\medskip

{\bf Claim \ref{2/3}.2.} $|M_7||_C=|2K_C|$.

\begin{proof} If $C$ is a general curve in the moving part of $|K_F|$, one has $K_F=\sigma^*(K_{F_0})+E_{(0)}$ and $E_{(0)}\cap C$ is a single point $P\in C$ with $2P\sim K_C$, which is due to the fact that $|K_{F_0}|$ has exactly one base point. In particular, we have $K_{F}|_C=2P$. This means that $(\pi^*(K_X)|_F+E_{\pi}|_F)|_C=K_F|_C=2P$, once we fix a general fiber $F$ and a general curve $C$ on $F$. 

Since $\OO_{\Gamma}(1)\hrw f_*\omega_{X'}$, Lemma \ref{_*} implies $3\pi^*(K_X)|_F\geq C$. Now the Kawamata-Viehweg vanishing theorem (\cite{KV,VV}) gives
\begin{eqnarray*}
|K_{X'}+\roundup{5\pi^*(K_X)}+F|_F&=&|K_F+\roundup{5\pi^*(K_X)}|_F|\\
&\supset&
|K_F+\roundup{2\pi^*(K_X)|_F}+C|+\text{(fixed divisor)}
\end{eqnarray*}
 and 
 $$|K_F+\roundup{2\pi^*(K_X)|_F}+C||_C=|K_C+D|$$
 with $\deg(D)\geq 2\xi\geq \frac{4}{3}$. Since $\pi^*(K_X)|_F\leq K_F$, we have $D=2P$. 
 
Noting that $|K_{X'}+\roundup{5\pi^*(K_X)}+F|+\text{(fixed divisor)}\subset |7K_{X'}|$ and $|K_C+D|$ is movable, we have
$$|M_7|_C|\supset |M_7||_C\supset |K_C+D|+\text{(fixed divisor)}.$$
Since $\deg(M_7|_C)\leq 7(\pi^*(K_X)|_F\cdot C)=\frac{14}{3}$, we get $\deg(M_7|_C)\leq 4$. Thus the only possibility is 
$M_7|_C\sim K_C+D$ and $|M_7||_C=|K_C+D|=|2K_C|$, which gives a finite map of degree 2. 
\end{proof}

Clearly $\map_7|_F$ distinguishes different general curves $C$, we see that $\map_7$ is generically finite of degree 2. So we conclude Theorem \ref{m1}(1). 
\medskip

\subsection{\bf Birationality of $\map_6$}\label{R2}\ 

 In this subsection we shall prove Theorem \ref{m1}(2). 

By \cite[Theorem 1.2, Theorem 3.3]{IJM}, we only need to assume $p_g(X)=2$ and $\Gamma\cong \bP^1$ to prove the birationality of $\map_6$. 

We have an induced fibration $f:X'\rw \Gamma$ where we pick a general fiber $F$. By the surface theory, $F$ must be among the following types, since $p_g(F)>0$:
\begin{itemize}
\item[a.] $K_{F_0}^2\geq 3$.
\item[b.] $K_{F_0}^2=2$.
\item[c.] $K_{F_0}^2=1$, $p_g(F)=1$.
\item[d.] $K_{F_0}^2=1$, $p_g(F)=2$.
\end{itemize}

Clearly it is sufficient to prove the birationality of $\map_6|_F$ for a general fiber $F$. 
\medskip

{\bf Claim \ref{R2}.1.} {\em If $F$ is of Type $(a)$, $\map_6$ is birational.}
\begin{proof} 
 By Kawamata-Viehweg vanishing, we have
\begin{equation}\label{L4}
\begin{array}{rl}
|K_{X'}+\roundup{4\pi^*(K_X)}+M_1||_F&=|K_F+\roundup{4\pi^*(K_X)}|_F|\\
 &\supset
|K_F+\roundup{L_{4}}|+(\text{fixed divisor})
\end{array}
\end{equation}
where $L_{4}:=4\pi^*(K_X)|_F$ is a nef and big $\bQ$-divisor.  

By \cite[Lemma 3.7]{MZ}, there exists an effective $\bQ$-divisor $H_{\eps}$ such that
\begin{equation}\label{s1}
\pi^*(K_X)|_F \equiv (\frac{1}{2}-\eps)\sigma^*(K_{F_0})+H_{\eps}
\end{equation}
for all very small rational numbers $\eps>0$. Noting that both $\pi^*(K_X)|_F$ and $\sigma^*(K_{F_0})$ are nef, we get
\begin{equation}\label{square}
L_{4}^2\geq 16\cdot (\frac{1}{2}-\eps)^2\cdot\sigma^*(K_{F_0})^2\geq 3(2-4\eps)^2>8
\end{equation}
whenever $\eps$ is small enough.
\medskip

{\bf ($\ddag$)} {}For a very general point $P\in F$, any curve $C_P$ passing through $P$ is of general type, i.e. $g(C_P)\geq 2$. Then an easy exercise will show $(\sigma^*(K_{F_0})\cdot C_P)\geq 2$ simply due to the fact $K_{F_0}^2>1$. Pick a sequence of rational numbers $\{\eps_n\}$ which converges to zero. By the choice of $P$, we may assume $C_P\not\subset \bigcup_{n}H_{\eps_n}$. In particular, $(C_P\cdot H_{\eps_n})\geq 0$ for all $n>0$.
Thus 
$$(L_{4}\cdot C_P)\geq 4(\frac{1}{2}-\eps_n) (\sigma^*(K_{F_0})\cdot C_P)=(2-4\eps_n)(\sigma^*(K_{F_0})\cdot C_p).$$
Taking the limit while $n\mapsto +\infty$, we have $(L_{4}\cdot C_P)\geq 4$. 
\medskip

Now Lemma \ref{masek} implies that $|K_{F}+L_{4}|$ gives a birational map. Thus $\map_6$ is birational.
\end{proof}

{\bf Claim \ref{R2}.2.} {\em If $K_X^3>6$ and $F$ is of Type (b), $\map_6$ is birational.}

\begin{proof} Take $|G|:=|2\sigma^*(K_{F_0})|$ if $p_g(F)\neq 3$ and, otherwise, take $|G|$ to be the moving part of $|K_F|$. Clearly $\map_6|_F$ distinguishes different general irreducible elements of $|G|$ by Lemma \ref{_*} and Relation (\ref{L4}) respectively.  

When $K_X^3>6$, $\tau_0:=\frac{K_X^3}{3K_{F_0}^2}>1$ and Lemma \ref{RR} implies that there is an effective $\bQ$-divisor $J_{\eps}$ with
$$\pi^*(K_X)|_F\equiv (\frac{\tau_0}{\tau_0+1}-\eps)\sigma^*(K_{F_0})+J_{\eps}$$
for any small rational numbers $\eps>0$. Take a small $\eps_0$ such that $\eta_0:=\frac{\tau_0}{\tau_0+1}-\eps_0>\frac{1}{2}$. 

If $p_g(F)\neq 3$, a generic irreducible element $C$ of $|G|$ is even and non-hyperelliptic. We see $\beta\geq \frac{1}{2}\eta_0>\frac{1}{4}$. Since $\alpha_6=(6-2-\frac{1}{\beta})\xi>0$, Theorem \ref{CZkey}(iii) implies the birationality of $\map_6$. 

If $p_g(F)=3$, $\beta\geq \eta_0>\frac{1}{2}$ and $\xi:=(\pi^*(K_X)|_F\cdot C)>\frac{1}{2}(\sigma^*(K_{F_0})\cdot C)\geq 1$. Since $\alpha_6\geq (6-2-\frac{1}{\beta})\xi>2$, $\map_6$ is birational again by Theorem \ref{CZkey}. The claim is proved.
\end{proof}

{\bf Claim \ref{R2}.3.} {\em If $K_X^3>3$ and $F$ is of Type (c), $\map_6$ is birational.}
\begin{proof} Take $|G|:=|2\sigma^*(K_{F_0})|$, which is base point free. Again Lemma \ref{_*} says $\map_6|_F$ distinguishes different general members in $|G|$. 

Since $\tau_0=\frac{K_X^3}{3K_{F_0}^2}>1$, we have $\beta>\frac{1}{4}$ similarly. Thus we still have $\alpha_6>0$ and $\map_6$ is birational by Theorem \ref{CZkey}.
\end{proof}

{\bf Claim \ref{R2}.4.} {\em If $K_X^3>5$ and $F$ is of Type (d), then $\map_6$ is birational.}
\begin{proof} Take $|G|$ to be the moving part of $|K_F|$. Then we have $\xi=(\pi^*(K_X)|_F\cdot C)\geq \frac{2}{3}$ from \ref{2/3}. 

Since $\tau_0=\frac{K_X^3}{3K_{F_0}^2}>\frac{5}{3}$, we have $\frac{p}{m_0}>\frac{5}{3}$ and $\beta>\frac{5}{8}$ by Lemma \ref{RR}.  Since $\alpha_5\geq (5-1-\frac{3}{5}-\frac{8}{5})\xi>1$, we get $\xi\geq \frac{4}{5}$ by Theorem \ref{CZkey}. Now $\alpha_6\geq \frac{14}{5}\xi>2$ which implies the birationality of $\map_6$ by Theorem \ref{CZkey} once more.
\end{proof}

We have proved Theorem \ref{m1}(2).

\section{\bf Proof of Theorem \ref{m1} (Part 2)}
In this section we shall work on the birationality of $\map_5$. By \cite[Theorem 1.2]{IJM}, we only need to study the cases $p_g(X)=2,\ 3$. 
\medskip

\subsection{\bf $\map_5$ in the case $p_g(X)=3$ and $d_1=2$.}\label{pg3d2}\ 

When $d_1=2$, a general fiber $C$ of $f$ is a curve of genus $\geq 2$. Pick a general member $S\in |M_1|$. Take $|G|:=|S|_S|$ where 
$S|_S\equiv eC$ with $e\geq 1$. Taking the restriction, we get
$$\pi^*(K_X)|_S\equiv S|_S+{E_1'}|_S.$$
Set $L:=\pi^*(K_X)|_S$ which is an effective nef and big $\bQ$-divisor. Clearly, one has $L^2\geq \xi$ by definition.
\medskip

{\bf Claim \ref{pg3d2}.1.} When $g(C)\geq 3$, $\xi>1$; when $g(C)=2$, $\xi\geq 1$. 
\begin{proof}
This is a direct consequence of Theorem \ref{CZkey}. In fact, we have $p=1$ and $\beta=1$. When $g(C)\geq 3$, Theorem \ref{CZkey} implies $\xi\geq \frac{4}{3}>1$. When $g(C)=2$, clearly one has $\xi\geq \frac{2}{3}$. Repeatedly taking suitable integer $m$ and running Theorem \ref{CZkey}(2), one has no difficulty to obtain $\xi\geq 1$ (an easy exercise!).
\end{proof}

{\bf Claim \ref{pg3d2}.2.} $K_X^3>1$ if and only if $L^2>1$.
\begin{proof}
To see this, we have the following inequality:
$$K_X^3=\pi^*(K_X)^3=(\pi^*(K_X)^2\cdot S)+(\pi^*(K_X)^2\cdot E_1')\geq L^2.$$
On the other hand, if we take a sufficiently large integer $m$ such that $|m\pi^*(K_X)|$ is base point free, then a general member $T$ is a smooth surface and we apply this to estimate $L^2$ by the Hodge index theorem:
\begin{eqnarray*}
L^2&=&\frac{1}{m}(\pi^*(K_X)|_T\cdot S|_T)\geq \frac{1}{m}
\sqrt{(\pi^*(K_X)|_T)^2\cdot (S|_T)^2}\\
&\geq&\sqrt{K_X^3\cdot \xi}\geq \sqrt{K_X^3}.\end{eqnarray*}
So the claim is true.
\end{proof}

{\bf Claim \ref{pg3d2}.3.} When $L^2>1$, $|K_S+\roundup{3L}|$ gives a birational map.
\begin{proof}
In fact, we have $L=\pi^*(K_X)=C+E_1'|_S$. There are two cases: (i) $\xi>1$; (ii) $\xi=1$.

The first case is easier since 
the vanishing theorem gives
$$|K_S+\roundup{2L}+C||_C=|K_C+\roundup{2L}|_C|$$
and clearly the linear system on the right hand side gives an embedding as 
$\deg(\roundup{2L}|_C)\geq 2\xi>2$. Thus $|K_S+3L|$ gives a birational map. 

Assume, from now on, $\xi=1$. Then
$L^2>1$ implies 
$(L\cdot E_1'|_S)>0$. Let us consider the Zariski decomposition of the effective $\bQ$-divisor $\hat{J}:=L+E_1'|_S$. Since $L$ is nef and $E_1'|_S$ is effective, we may write $E_1'|_S=N^{+}+N^{-}$ such that:
\begin{itemize}
\item[(1)] $L+N^{+}$ is nef;
\item[(2)] $(L+N^{+})\cdot N^{-}=0$;
\item[(3)] both $N^{+}$ and $N^{-}$ are effective 
$\bQ$-divisors.
\end{itemize}
Clearly, $(L\cdot N^{+})>0$. We want to show 
$(C\cdot N^{+})>0$. In fact, if $N^{-}\equiv 0$, then we have $(C\cdot N^{+})=\xi>0$. Thus we may always assume $N^{-}\not\equiv 0$, which means ${N^{-}}^2<0$ since $L+N^{+}$ is nef and big. {}From the property (2), we see $(N^+\cdot N^-)=-(L\cdot N^{-})\leq 0$ since $L$ is nef. Since $L=C+N^++N^-$, so $(L\cdot N^+)>0$ implies $(C\cdot N^+)+{N^+}^2>0$. Suppose $(C\cdot N^+)=0$, then $N^+$ is vertical with respect to the fibration on $S$ and ${N^+}^2\leq 0$, which contradicts to ${N^+}^2=(C\cdot N^+)+{N^+}^2>0$. So we have proved $(C\cdot N^+)>0$. 

Noting that 
$$|K_S+\roundup{2L+N^+}+C|+\text{(fixed divisor)}\subset |K_S+\roundup{3L}|$$ and that the vanishing theorem gives
$$|K_S+\roundup{2L+N^+}+C||_C=|K_C+D_2|$$
with $\deg(D_2)\geq 2\xi+(C\cdot N^+)>2$. Thus $|K_S+\roundup{2L+N^+}+C|$ gives a birational map and so does $|K_S+\roundup{3L}|$. We are done.
\end{proof}

On the other hand, Kawamata-Viehweg vanishing gives 
\begin{eqnarray*}
|K_{X'}+\roundup{3\pi^*(K_X)}+S||_S&=&|K_S+\roundup{3\pi^*(K_X)}|_S|\\
&\supset& |K_S+\roundup{3L}|+\text{(fixed divisor)}
\end{eqnarray*}
So we see that $\varphi_5$ is birational whenever $K_X^3>1$. 

When $K_X^3=1$, automatically $L^2=1$ and $\xi=1$ by Claim \ref{pg3d2}.2. Thus $g(C)=2$.

So we have proved the following:

\begin{thm}\label{pg3-1} Let $X$ be a minimal projective 3-fold of general type. Assume $K_X^3>1$, $p_g(X)=3$ and $d_1=2$. Then $\map_5$ is birational.
\end{thm}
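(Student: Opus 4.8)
The plan is to reduce the birationality of $\map_5$ to a statement on a single general surface section $S$ and then lift it to $X'$ by vanishing. Since $d_1=2$, a generic irreducible element $S\in|M_1|$ is a smooth projective surface of general type, and by the Fact in Section 2 the map $\map_5$ already distinguishes distinct such $S$; hence it suffices to show that the restriction of $|5K_{X'}|$ to a general $S$ is birational. Setting $L:=\pi^*(K_X)|_S$, a nef and big $\bQ$-divisor, the intermediate goal is to prove that $|K_S+\roundup{3L}|$ is birational. Granting this, Kawamata-Viehweg vanishing on $X'$ yields
$$|K_{X'}+\roundup{3\pi^*(K_X)}+S||_S=|K_S+\roundup{3\pi^*(K_X)}|_S|\supset|K_S+\roundup{3L}|+(\text{fixed part}),$$
and since $S\leq K_{X'}$ and $\roundup{3\pi^*(K_X)}\leq 3K_{X'}$, the divisor on the left lies in $|5K_{X'}|$; birationality of $\map_5$ then follows from that of the restricted system together with the separation of distinct $S$.

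First I would assemble the numerical input. On $S$ the system $|G|:=|S|_S|$ induces a fibration whose general member is $eC$ (with $e\geq 1$ and $C$ a smooth curve of genus $\geq 2$); writing $\xi:=(L\cdot C)$, I would apply \cite[Theorem 3.6]{MZ} with $p=1$ and $\beta=1$ to get $\xi\geq\frac43>1$ when $g(C)\geq 3$ and, after the standard repeated optimization, $\xi\geq 1$ when $g(C)=2$. With $\xi\geq 1$ secured, the equivalence $K_X^3>1\iff L^2>1$ is the next target: one direction is the trivial bound $K_X^3=\pi^*(K_X)^3\geq L^2$, and the other follows from the Hodge index theorem on a smooth member of a base-point-free multiple $|m\pi^*(K_X)|$, giving $L^2\geq\sqrt{K_X^3\cdot\xi}\geq\sqrt{K_X^3}$. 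This converts the hypothesis $K_X^3>1$ into $L^2>1$.

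The technical heart is the birationality of $|K_S+\roundup{3L}|$ under $L^2>1$. As the curves $C$ cover $S$, it suffices to distinguish distinct general $C$ and to embed a fixed general $C$. When $\xi>1$ this is immediate: vanishing gives $|K_S+\roundup{2L}+C||_C=|K_C+\roundup{2L}|_C|$ with $\deg(\roundup{2L}|_C)\geq 2\xi>2$, hence an embedding. The delicate case is $\xi=1$ with $L^2>1$, where the surplus positivity of $L$ sits in $E_1'|_S$ rather than in $(L\cdot C)$. Here I would pass to the Zariski decomposition $E_1'|_S=N^{+}+N^{-}$ relative to $L$ and aim to show $(C\cdot N^{+})>0$; this lifts the restricted degree to $2\xi+(C\cdot N^{+})>2$, and $|K_S+\roundup{2L+N^{+}}+C|$---a subsystem of $|K_S+\roundup{3L}|$---again restricts to a very ample system on $C$.

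The main obstacle I anticipate is exactly this positivity statement $(C\cdot N^{+})>0$ in the case $\xi=1$. The difficulty is that $N^{+}$ could a priori be vertical for the fibration on $S$, forcing $(N^{+})^2\leq 0$ and $(C\cdot N^{+})=0$. To rule this out I would combine $(L\cdot N^{+})>0$ (which is forced by $L^2>1$ together with $\xi=1$), the decomposition $L=C+N^{+}+N^{-}$, and the Zariski orthogonality $(L+N^{+})\cdot N^{-}=0$ (with $L$ nef and $N^{-}$ effective) to deduce $(C\cdot N^{+})+(N^{+})^2>0$, which is incompatible with $N^{+}$ being vertical. Getting this intersection-theoretic bookkeeping right, rather than the routine vanishing and degree computations, is where the real care lies; the remaining boundary case $K_X^3=1$ (which forces $L^2=\xi=1$ and $g(C)=2$) is precisely the one excluded by hypothesis.
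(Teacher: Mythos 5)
Your proposal follows essentially the same route as the paper's own proof: the same reduction to $|K_S+\roundup{3L}|$ via Kawamata--Viehweg vanishing, the same bounds $\xi\geq\frac43$ (resp.\ $\xi\geq 1$) from \cite[Theorem 3.6]{MZ}, the same Hodge-index equivalence $K_X^3>1\iff L^2>1$, and in the delicate case $\xi=1$ the identical Zariski-type decomposition $E_1'|_S=N^++N^-$ with the same verticality argument forcing $(C\cdot N^+)>0$. No substantive difference; the proposal is correct.
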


\begin{rem} Theorem \ref{pg3-1} is sharp due to Example A (3). It is interesting to know whether $K_X^3>1$ is necessary to get the birationality of $\map_5$. 
\end{rem}

\subsection{\bf $\map_5$ in the case $p_g(X)=3$ and $d_1=1$.}\label{pg3d1}\ \ 

Pick a general fiber $F$ of the induced fibration $f:X'\rw \Gamma$. By \cite[Theorem 3.3]{IJM}, it is sufficient to assume $b=g(\Gamma)=0$, i.e. $\Gamma\cong \bP^1$. Note that $p_g(X)>0$ implies $p_g(F)>0$ and thus $F$ must be among the following types by the surface theory: 
\begin{itemize}
\item[(i)] $(K_{F_0}^2, p_g(F))=(1,2)$; 
\item[(ii)] $(K_{F_0}^2, p_g(F))=(2,3)$; 
\item[(iii)] other surfaces with $p_g(F)>0$. 
\end{itemize}

It suffices to show ${\varphi_5}|_F$ is birational for a general fiber $F$. One has $m_0=1$ and $p=2$. By \cite[Lemma 3.7]{MZ}, for any rational number $\varepsilon>0$, there is an effective $\bQ$-divisor $H_{\varepsilon}$ such that
\begin{equation}\label{eq4.1} \pi^*(K_X)|_F\equiv (\frac{2}{3}-\eps)\sigma^*(K_{F_0})+\He. \end{equation}
\medskip

{\bf Claim \ref{pg3d1}.1.}\label{ii} {\em If $F$ is of Type (ii), $\map_5$ is birational.}
\begin{proof}  Take $|G|$ to be the moving part of $|K_F|$. Then a general member $C\in |G|$ is a smooth curve of genus $3$ and $|G|$ gives a generically finite map (see \cite[p226]{BPV}). Besides, Relation (\ref{eq4.1}) implies $\beta\geq \frac{2}{3}-\varepsilon$ for any small $\varepsilon>0$ and thus $\xi\geq \frac{2}{3}(\sigma^*(K_{F_0})\cdot C)\geq \frac{4}{3}$.
Take $m=4$ and then $\alpha_4:=(4-1-\frac{1}{p}-\frac{1}{\beta})\xi>1$. By Theorem \ref{CZkey}, one gets $\xi\geq \frac{3}{2}$. Since Kawamata-Viehweg vanishing gives
\begin{equation}\label{eq4.2}|K_{X'}+\roundup{3\pi^*(K_X)}+F||_F\supset |K_F+\roundup{3\pi^*(K_X)|_F}|+\text{(fixed divisor)}\end{equation}
where the last linear system distinguishes different general curves $C$ and $\alpha_5\geq 2\xi\geq 3$, Theorem \ref{CZkey} implies the birationality of $\varphi_5$. \end{proof}

{\bf Claim \ref{pg3d1}.2.}\label{iii} {\em If $F$ is of Type (iii), $\map_5$ is birational.} 
\begin{proof}
We take $|G|=|2\sigma^*(K_{F_0})|$. By the surface theory, we know that $|G|$ is base point free and a general member $C$ of $|G|$ is non-hyperelliptic. Lemma \ref{_*} implies that ${\varphi_4}|_F$ distinguishes different general curves $C$. On the other hand, Equation (\ref{eq4.1}) implies $\beta\mapsto \frac{1}{3}$. So $\alpha_5=(5-1-\frac{1}{2}-\frac{1}{\beta})\xi>0$. Noting that $C$ is even and non-hyperelliptic, Theorem \ref{CZkey} implies the birationality of $\varphi_5$. \end{proof}

{\bf Claim \ref{pg3d1}.3.}\label{i} {\em If $F$ is of Type (i) and $K_X^3>6$, then $\map_5$ is birational.}
\begin{proof} On $F$, take $|G|$ to be the moving part of $|K_F|$. Since $K_X^3>6$, we have $\tau_0>2$ and Lemma \ref{RR} implies $\beta>\frac{2}{3}$. Similar to the case with $F$ being of type (ii), it suffices to study $\map_5|_C$. Recall we have $m_0=1$ and $p=2$. We have $\xi=(\pi^*(K_X)|_F\cdot C)\geq \beta\cdot(\sigma^*(K_{F_0})\cdot C)>\frac{2}{3}$. Repeatedly taking a suitable integer $m$ and running Theorem \ref{CZkey}(2), one would get $\xi\geq 1$. Now take $m=5$. We see $\alpha_5>2\xi\geq 2$. Thus, by Theorem \ref{CZkey}, $\varphi_5$ is birational.
\end{proof}

So we can conclude the following:

\begin{thm}\label{pg3-2} Let $X$ be a minimal projective 3-fold of general type. Assume $K_X^3>6$, $p_g(X)=3$ and $d_1=1$. Then $\map_5$ is birational.
\end{thm}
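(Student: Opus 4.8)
The plan is to reduce the statement to a question about a single general fibre of the canonical fibration and then to dispatch the fibre according to the invariants of its minimal model. Since $d_1=1$ and $p_g(X)=3$, the canonical map induces a fibration $f:X'\rw\Gamma$ onto a curve; by \cite[Theorem 3.3]{IJM} I may assume $b=g(\Gamma)=0$, i.e. $\Gamma\cong\bP^1$, the other case being already settled there. As $\map_5$ distinguishes distinct general fibres $F$, it suffices to show that $\map_5|_F$ is birational for one general fibre. The condition $p_g(F)>0$ (forced by $p_g(X)>0$) confines $F$ to exactly one of the three mutually exclusive and exhaustive types: (i) $(K_{F_0}^2,p_g(F))=(1,2)$, (ii) $(2,3)$, and (iii) any remaining surface with $p_g(F)>0$. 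Recording $m_0=1$ and $p=2$, Relation (\ref{eq4.1}) is the single positivity input common to all three cases.

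Types (ii) and (iii) should go through without any use of the hypothesis $K_X^3>6$, since those fibres already carry enough canonical positivity. In case (ii) I would take $|G|$ to be the moving part of $|K_F|$, whose general member $C$ is a genus-$3$ curve giving a generically finite map; Relation (\ref{eq4.1}) yields $\beta\geq\frac{2}{3}-\eps$ and hence $\xi\geq\frac{4}{3}$, and a first pass of \cite[Theorem 3.6]{MZ} with $m=4$ bootstraps $\xi\geq\frac{3}{2}$, after which $\alpha_5\geq 2\xi\geq 3$ combined with Kawamata--Viehweg vanishing forces birationality (Claim \ref{pg3d1}.1). In case (iii) I would instead take $|G|=|2\sigma^*(K_{F_0})|$, base point free with non-hyperelliptic general member $C\sim 2\sigma^*(K_{F_0})$; here Relation (\ref{eq4.1}) reads $\beta\mapsto\frac{1}{3}$, Lemma \ref{_*} separates the curves $C$, and the estimate $\alpha_5=(5-1-\frac12-\frac1\beta)\xi>0$ closes the case since $C$ is even and non-hyperelliptic (Claim \ref{pg3d1}.2). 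Both cases are therefore unconditional.

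The main obstacle is case (i), the $(1,2)$ fibre, and this is precisely where $K_X^3>6$ has to enter; it is the sole source of the numerical threshold in the theorem. The invariant $K_{F_0}^2=1$ is too small for the bare restriction inequality to deliver birationality, so the idea is to trade a lower bound on $K_X^3$ for a better coefficient by way of the ratio $\tau_0=\frac{K_X^3}{3K_{F_0}^2}$: the hypothesis $K_X^3>6$ gives $\tau_0>2$, and Lemma \ref{RR} then upgrades $\beta$ to $\beta>\frac{2}{3}$, strictly better than the generic $\frac{2}{3}-\eps$. Taking $|G|$ to be the moving part of $|K_F|$, I would get $\xi\geq\beta\,(\sigma^*(K_{F_0})\cdot C)>\frac{2}{3}$, bootstrap this to $\xi\geq 1$ by repeated use of \cite[Theorem 3.6]{MZ}, and conclude with $m=5$ from $\alpha_5>2\xi\geq 2$ that $\map_5|_C$, and hence $\map_5$, is birational (Claim \ref{pg3d1}.3). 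Assembling the three cases completes the argument. The delicate step --- and the reason this method cannot dispense with $K_X^3>6$ --- is exactly this bootstrap: running the inequality chain needs $\beta$ strictly above $\frac{2}{3}$, which in turn needs $\tau_0>2$.
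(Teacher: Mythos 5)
Your proposal is correct and follows the paper's own proof essentially step for step: the same reduction to $\Gamma\cong\bP^1$ via \cite[Theorem 3.3]{IJM}, the same three-way case division of $F$, the same choices of $|G|$ with the identical numerical bootstraps ($\xi\geq\frac{4}{3}\Rightarrow\xi\geq\frac{3}{2}$ in case (ii), the $\alpha_5>0$ argument for even non-hyperelliptic $C$ in case (iii)), and the same use of $\tau_0>2$ and Lemma \ref{RR} to get $\beta>\frac{2}{3}$ in the $(1,2)$ case, which is indeed the only place the hypothesis $K_X^3>6$ is consumed. Nothing to correct.
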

\medskip

\subsection{\bf $\map_5$ in the case $p_g(X)=2$.}\label{215}\ 

Automatically $d_1=1$ and this is parallel to \ref{R2}, but we are studying $\map_5$ instead. We keep the notation there and will omit redundant arguments. 
\medskip

{\bf Claim \ref{215}.1.} {\em If $K_{F_0}^2\geq 19$, $\map_5$ is birational.}
\begin{proof} The Bogomolov-Miyaoka-Yau inequaity $K^2\leq 9\chi$ implies $$p_g(F)=\chi(\OO_F)-1+q(F)\geq \frac{1}{9}K_{F_0}^2-1>1.$$
Take $|G|$ to be the moving part of $|K_F|$. Modulo birational modifications, we may assume that $|G|$ is base point free and so a generic irreducible element $C$ of $|G|$ is smooth. We still have Relation (\ref{eq4.2}) and set $L=\pi^*(K_X)|_F$. The linear system $|K_F+\roundup{3L}|$ clearly distinguishes different general curves $C$. 


If $|G|$ is not composed of a pencil, we have $C^2\geq  2$. By (\ref{s1}), we have $\beta\mapsto \frac{1}{2}$. Since $\alpha_5\geq \xi\geq \frac{1}{2}(\sigma^*(K_{F_0})\cdot C)\geq \frac{1}{2}\sqrt{K_{F_0}^2\cdot C^2}>2$,  
$\map_5$ is birational by Theorem \ref{CZkey}. 

If $|G|$ is composed of a pencil and $K_{F_0}^2\geq 19$, 
pick a generic irreducible element $C$. We study the rational number $\xi=(L\cdot C)$. If $\xi>2$, then $\alpha_5\geq \xi>2$ and $\map_5$ is birational. Otherwise, we have $\xi\leq 2$. Since $\nu_0=\frac{L^2}{2\xi}\geq \frac{19}{16}$, Lemma \ref{rr} implies 
$$L\genum (\frac{19}{16}-\delta)C$$
for any small rational number $\delta>0$, which means $\beta\geq \frac{19}{16}-\delta$. Note that $\xi\geq \frac{1}{2}(\sigma^*(K_{F_0})\cdot C)\geq 1$. 
Now $\alpha_5\geq (5-1-1-\frac{1}{\beta})\xi>2$ and so $\map_5$ is birational by Theorem \ref{CZkey} once more.
\end{proof}

{\bf Claim \ref{215}.2.} {\em If $K_X^3>81$ and $K_{F_0}^2\leq 18$, $\map_5$ is birational.}
\begin{proof} We prove the statement by analyzing  different numerical types of $F$. 
\medskip

{\bf a}. Assume $3\leq K_{F_0}^2\leq 18$. Then since $\tau_0=\frac{K_X^3}{3K_{F_0}^2}> \frac{3}{2}$, we have
 $$\pi^*(K_X)\equiv \frac{3}{2}F+E_{3/2}$$
 where $E_{3/2}$ is an effective $\bQ$-divisor and 
 $$\pi^*(K_X)|_F\genum \frac{3}{5}\sigma^*(K_{F_0})$$ 
 by Lemma \ref{RR}. By the vanishing theorem, we have
\begin{eqnarray*}
|K_{X'}+\roundup{4\pi^*(K_X)-\frac{2}{3}E_{3/2}}||_F
&=&|K_F+\roundup{4\pi^*(K_X)-F-\frac{2}{3}E_{3/2}}|_F|\\
&\supset&
|K_F+\roundup{Q}|+\text{(fixed divisor)}
\end{eqnarray*}
where 
$$Q:=(4\pi^*(K_X)-F-\frac{2}{3}E_{3/2})|_F\equiv \frac{10}{3}\pi^*(K_X)|_F\genum 2\sigma^*(K_{F_0}).$$
Now by a similar argument to ($\ddag$) in Claim \ref{R2}.1, $|K_F+\roundup{Q}|$ satisfies the condition of Lemma \ref{masek}. Thus $|K_F+\roundup{Q}|$ gives a birational map and so does $\map_5$. 
\medskip

{\bf b}. Assume $K_{F_0}^2=2$. As long as $K_X^3>24$, we have $\tau_0>4$ which means, by Lemma \ref{RR}, 
$$\pi^*(K_X)\genum (4+\delta)F$$
for some very small rational number $\delta>0$ and
$$\pi^*(K_X)|_F\genum (\frac{4}{5}+\eps_0)\sigma^*(K_{F_0})$$
for some small rational number $\eps_0>0$. The vanishing theorem gives
$$|K_{X'}+\roundup{4\pi^*(K_X)}|_F\supset
|K_F+3\sigma^*(K_{F_0})+\roundup{Q_b}|+\text{(fixed divisor)}$$
where $Q_b$ is certain nef and big $\bQ$-divisor. Now an easy exercise by applying the vanishing theorem on surfaces shows that $K_F+\sigma^*(K_{F_0})+\roundup{Q_b}$ is effective due to the fact that $\sigma^*(K_{F_0})$ is 1-connected. This means that $\map_5|_F$ distinguishes different general curves $C$ in $|G|:=|2\sigma^*(K_{F_0})|$. {}Finally one sees $\map_5$ is birational by Theorem \ref{CZkey}.
\medskip

c. Assume $K_{F_0}^2=1$.  As long as $K_X^3>12$, we have $\tau_0>4$ and then, similarly, we are reduced to prove that $|K_F+3\sigma^*(K_{F_0})+\roundup{Q_c}|$ gives a birational map where $Q_c$ is certain nef and big $\bQ$-divisor. This is the case, according to Theorem \ref{CZkey}, by taking $|G|:=|2\sigma^*(K_{F_0})|$ if $p_g(F)=1$ and $|G|$ to be the moving part of $|K_F|$ if $p_g(F)=2$. 
We also leave this as an easy exercise.

In a word, $\map_5$ is birational when $K_{X}^3>81$ and $p_g(X)=2$.
\end{proof}

We have proved the following:
\begin{thm}\label{25} Let $X$ be a minimal projective 3-fold of general type. Assume $p_g(X)=2$ and $K_X^3>81$. Then $\map_5$ is birational.
\end{thm}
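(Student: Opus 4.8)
The plan is to reduce everything to the birationality of $\map_5|_F$ on a general fiber $F$ and then to assemble the two preceding claims by a single dichotomy on the invariant $K_{F_0}^2$ of the fiber. Since $p_g(X)=2$ forces $d_1=1$, the induced fibration $f:X'\rw\Gamma$ has $\Gamma\cong\bP^1$ (after invoking the reductions of \cite[Theorem 1.2, Theorem 3.3]{IJM}, exactly as in \ref{R2}), and a general fiber $F$ is a surface of general type with $p_g(F)>0$. Because $\map_5$ automatically distinguishes distinct general fibers, it suffices to produce, on $F$, a movable sub-system $|G|$ together with lower bounds on $\xi=(\pi^*(K_X)|_F\cdot C)$ and on the coefficient $\beta$ of the canonical restriction inequality, so that \cite[Theorem 3.6]{MZ} (or Lemma \ref{masek}) yields birationality of $\map_5|_F$.

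The dichotomy is the heart of the matter. When $K_{F_0}^2\geq 19$, the fiber is already positive enough on its own: the restricted divisor $L=\pi^*(K_X)|_F$ satisfies $L^2>8$ and $(L\cdot C_P)\geq 4$ through a very general point, so Lemma \ref{masek} applies directly and no hypothesis on $K_X^3$ is required; this is precisely Claim \ref{215}.1, where the threshold $19$ emerges from the estimate $\nu_0=L^2/(2\xi)$ in the pencil case via Lemma \ref{rr}. When $K_{F_0}^2\leq 18$, one instead converts the global largeness $K_X^3>81$ into fiberwise positivity: the bound is calibrated so that $\tau_0=\frac{K_X^3}{3K_{F_0}^2}>\frac{3}{2}$ for every $K_{F_0}^2\leq 18$, and Lemma \ref{RR} then supplies both $\pi^*(K_X)\genum\frac{3}{2}F+E_{3/2}$ and $\pi^*(K_X)|_F\genum\frac{3}{5}\sigma^*(K_{F_0})$. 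A Kawamata--Viehweg vanishing argument produces $|K_F+\roundup{Q}|$ with $Q\genum 2\sigma^*(K_{F_0})$, which meets the numerical requirements of Lemma \ref{masek}; this is Claim \ref{215}.2, the very small cases $K_{F_0}^2=1,2$ being handled by the even larger $\tau_0$ available there.

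Thus the whole statement is the assertion that the two regimes $K_{F_0}^2\geq 19$ and $K_{F_0}^2\leq 18$ exhaust all possibilities, so that Claim \ref{215}.1 and Claim \ref{215}.2 together cover every general-type fiber $F$ once $K_X^3>81$. I expect the main obstacle to lie not in this final case split, which is trivial, but in the sharpness of the constant: the value $81=\frac{3}{2}\cdot 3\cdot 18$ is dictated entirely by the boundary case $K_{F_0}^2=18$, and the genuinely delicate point is verifying that for the intermediate surfaces $3\leq K_{F_0}^2\leq 18$ the vanishing-theorem construction really delivers a $\bQ$-divisor $Q$ satisfying both $Q^2>8$ and $(Q\cdot C_P)\geq 4$ through a very general point, so that Lemma \ref{masek} can legitimately be invoked.
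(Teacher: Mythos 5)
Your top-level assembly is indeed the paper's: the dichotomy $K_{F_0}^2\geq 19$ versus $K_{F_0}^2\leq 18$, with the constant $81=\frac{3}{2}\cdot 3\cdot 18$ calibrated by the boundary fiber, and Theorem \ref{25} is literally the conjunction of Claim \ref{215}.1 and Claim \ref{215}.2. The genuine gap is that you assign Lemma \ref{masek} a role it cannot play in exactly the cases that carry the difficulty. For Claim \ref{215}.1 you assert that $L=\pi^*(K_X)|_F$ satisfies $L^2>8$ and $(L\cdot C_P)\geq 4$, so that Masek ``applies directly'' with no hypothesis on $K_X^3$. Neither bound is certifiable: Relation (\ref{s1}) only gives $L\genum (\frac{1}{2}-\eps)\sigma^*(K_{F_0})$, hence $(L\cdot C_P)\geq 1$ through a very general point; moreover the divisor actually available inside $|5K_{X'}||_F$ after Kawamata--Viehweg vanishing is $3L$ (one copy of $\pi^*(K_X)$ is absorbed by $K_{X'}$ and one by $F$), and when the moving part of $|K_F|$ is a genus~$2$ pencil its members sweep out $F$ with $(3L\cdot C)=3\xi$, where only $\xi\geq 1$ is provable --- below Masek's threshold $4$. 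This is precisely why the paper proves Claim \ref{215}.1 not by Lemma \ref{masek} but by the pencil/non-pencil split, Lemma \ref{rr} (giving $\beta\geq\frac{19}{16}-\delta$) and \cite[Theorem 3.6]{MZ} (giving $\alpha_5\geq(3-\frac{1}{\beta})\xi>2$); your own sentence, which derives the threshold $19$ from $\nu_0=L^2/(2\xi)$ and Lemma \ref{rr}, already belongs to that route and contradicts the claim that Masek suffices.

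The same misassignment is fatal in Claim \ref{215}.2 for the smallest fibers. With $Q\genum 2\sigma^*(K_{F_0})$ the certified bound $Q^2\geq 4K_{F_0}^2$ is at most $8$ when $K_{F_0}^2\leq 2$, so Masek's condition (1) is not met; and while a larger $\tau_0$ does repair the case $K_{F_0}^2=2$ (the best coefficient available in $|5K_{X'}||_F$ is $\frac{4\tau_0-1}{\tau_0+1}$, which exceeds $2$, and the argument $(\ddag)$ applies since $K_{F_0}^2>1$), no value of $\tau_0$ repairs $K_{F_0}^2=1$: the coefficient $\frac{4\tau_0-1}{\tau_0+1}$ is always strictly less than $4$, whereas a $(1,2)$ fiber carries its canonical genus~$2$ pencil, sweeping out $F$, whose general member has $(\sigma^*(K_{F_0})\cdot C)=1$. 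Thus Masek's condition (2), degree at least $4$ on every curve through a very general point, is unattainable there no matter how large $K_X^3$ is. The paper switches tools at exactly this point: cases (b) and (c) of Claim \ref{215}.2 invoke \cite[Theorem 3.6]{MZ} with an explicit $|G|$ ($|2\sigma^*(K_{F_0})|$, or the moving part of $|K_F|$ on a $(1,2)$ fiber), where only $\alpha_5>2$ --- or $\alpha_5>0$ for even, non-hyperelliptic members --- is required. Consequently your closing assessment inverts the difficulty: for $3\leq K_{F_0}^2\leq 18$ the Masek verification is the routine part ($Q^2\geq 4K_{F_0}^2\geq 12>8$ and $(\ddag)$ give both conditions at once), while the real content of the proof lies in the cases $K_{F_0}^2\leq 2$ and in the pencil sub-case of $K_{F_0}^2\geq 19$ that your proposal passes over.
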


Theorem \ref{pg3-1}, Theorem \ref{pg3-2} and Theorem \ref{25} imply Theorem \ref{m1}(3). 

\section{\bf Characterizing the birationality of $\map_4$ (Part A)}

By Chen-Zhang \cite[Theorem 1.3]{MZ}, we only need to study the cases $p_g(X)=2,\ 3,\ 4$. 
\medskip

\subsection{\bf $\map_4$ in the case $p_g(X)=4$ and $d_1=3$}\label{44}\ 

Keep the same setting and notation as in \ref{set}. Pick a general member $S\in |M_1|$. Consider the linear system
$|4K_{X'}|$ and its sub-system $|K_{X'}+\roundup{2\pi^*(K_X)}+M_1|$. Kawamata-Viehweg vanishing gives the relation
\begin{equation}\label{e1}\begin{array}{rl}
|K_{X'}+\roundup{2\pi^*(K_X)}+M_1||_S&=
|K_S+\roundup{2\pi^*(K_X)}|_S|\\
&\supset |K_S+\roundup{2L}|+\text{(fixed divisor)}\end{array}
\end{equation}
where $L:=\pi^*(K_X)|_S$ is an effective nef and big $\bQ$-divisor on $S$. 
Set $|G|=|M_1|_S|$. Pick a generic irreducible element $C$ of $|G|$. Then, since $p_g(S)>0$, $|K_S+\roundup{2L}|$ distinguishes different general curves $C$. So it suffices to prove the birationality (or non-birationality) of $\varphi_{4}|_C$. 
In fact, Kawamata-Viehweg vanishing gives, furthermore,
$$|K_F+\roundup{2L-E_1'|_F}||_C=|K_C+D_3|$$
where $D_3:=\roundup{2L-E_1'|_F-C}|_C$ with $\deg(D_3)\geq \xi:=(L\cdot C)$. 
\medskip

{\bf Claim \ref{44}.1.}\label{1-1} {\em $K_X^3>2$ if and only if $\xi>2$.}

\begin{proof} Pick a general surface $S\in |M_1|$. We have
$$\pi^*(K_X)|_S\sim S|_S+E_1'|_S$$ and so 
$$K_X^3=(\pi^*(K_X))^3\geq (\pi^*(K_X)^2\cdot S)=\xi.$$
On $S$, since $|C|$ is not composed of a pencil of curves, $C^2\geq 2$. Thus $\xi=(\pi^*(K_X)\cdot S^2)\geq C^2\geq 2$. 

On the other hand, by choosing a sufficiently large and divisible integer $n$ to make $|n\pi^*(K_X)|$ base point free, one can apply the Hodge index theorm on the smooth surface $S_{[n]}\in |n\pi^*(K_X)|$ to get the inequality:
$$\xi=(\pi^*(K_X)\cdot S^2)=\frac{1}{n}(\pi^*(K_X)|_{S_{[n]}}\cdot S|_{S_{[n]}})\geq \sqrt{K_X^3\cdot \xi}.$$
By \cite[Theorem 1.5(2)]{MA}, we have  $K_X^3\geq 2$. Thus it follows that 
$\xi=2$ if and only $K_X^3=2$. In fact, we have proved $K_X^3=\xi$. The lemma is proved.
\end{proof}
\medskip

{\bf Claim \ref{44}.2.}\label{1-2} {\em $\varphi_4$ is generically finite of degree $\leq 2$.}

\begin{proof} By definition, $p=1$ and $\beta=1$. Then $\alpha_4=\xi$. Whenever $\xi>2$, \cite[Theorm 3.6]{MZ} implies the birationality of $\varphi_4$. Otherwise, $\xi=2$ implies $K_X^3=2$ and since we have
\begin{equation}\label{e2}
K_X^3\geq S^3\geq \deg(\varphi_1)\geq 2,
\end{equation}
$\deg(\map_1)=2$, i.e. $\varphi_1$ must be generically finite of degree $2$.
\end{proof}
\medskip

{\bf Claim \ref{44}.3.}\label{1-3} {\em When $K_X^3=2$, $\varphi_4$ is generically finite of degree $2$.}

\begin{proof} As we have seen in the previous Claim, $\varphi_1$ is generically finite of degree 2. This means $\varphi_1|_C$ is a double cover onto $\bP^1$. In particular, $C$ is hyperelliptic and $M_1|_C$ is exactly a $g_2^1$ of $C$. Note that $C$ is a curve of genus $\geq 4$ since $(K_S\cdot C)+C^2\geq 6$. We have
\begin{eqnarray*}
|K_F+2L||_C&\supset &|K_F+2S|_S||_C+\text{(fixed divisor)}\\
&=&|K_C+S|_C|+\text{(fixed divisor)}
\end{eqnarray*}
by the vanishing theorem. This, together with the relation (\ref{e1}), implies $|M_4||_C\supset |K_C+S|_C|+\text{(fixed divisor)}$, where the last one is base point free with $\deg(K_C+S|_C)\geq 8$. Since $(4\pi^*(K_X)\cdot C)=8$, we see $|M_4||_C=|K_C+S|_C|$, which gives a double cover. Thus $\varphi_4$ is generically a double cover.
\end{proof}

Claim \ref{44}.1, Claim \ref{44}.2 and Claim \ref{44}.3 directly imply the following:

\begin{thm}\label{d=3} Let $X$ be a minimal projective 3-fold of general type. Assume $p_g(X)=4$ and $\varphi_1$ is generically finite. Then $\varphi_4$ is not birational if and only if $K_X^3=2$.
\end{thm}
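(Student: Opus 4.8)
The theorem is the direct assembly of the three preceding claims, so the plan is to record how Claims \ref{44}.1, \ref{44}.2 and \ref{44}.3 combine to give the asserted equivalence that $\varphi_4$ is birational if and only if $K_X^3=2$. The organising input is the uniform lower bound $K_X^3\geq 2$ of \cite[Theorem 1.5(2)]{MA}, valid under the standing hypotheses $p_g(X)=4$, $d_1=3$ and $\varphi_1$ generically finite; it confines the analysis to the extremal value $K_X^3=2$ set against the range $K_X^3>2$.

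First I would import from Claim \ref{44}.1 the identification $\xi=K_X^3$, where $\xi:=(L\cdot C)$ measures $L:=\pi^*(K_X)|_S$ along a generic irreducible element $C$ of $|G|=|M_1|_S|$ for a general $S\in|M_1|$. The inequality $K_X^3\geq\xi$ is immediate from $\pi^*(K_X)|_S\sim S|_S+E_1'|_S$, while the reverse inequality $\xi\geq K_X^3$ is supplied by the Hodge index theorem applied on a smooth member of $|n\pi^*(K_X)|$ for $n\gg0$. This identification is precisely what lets the numerical criterion \cite[Theorem 3.6]{MZ}, entered with $p=1$ and $\beta=1$ so that $\alpha_4=\xi$, be read purely in terms of the volume $K_X^3$.

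Next I would invoke Claim \ref{44}.2 to the effect that $\varphi_4$ is in every case generically finite of degree at most two, so that birationality amounts to the generic mapping degree being one and the entire problem collapses to pinning down that degree at each admissible value of $K_X^3$. Claim \ref{44}.3 then carries out this determination at the boundary value $K_X^3=2$ by analysing $\varphi_4|_C$ for a general curve $C\in|M_1|_S|$: invoking $\deg(\varphi_1)=2$, the consequent hyperellipticity of $C$, the genus estimate $g(C)\geq 4$, and the Kawamata--Viehweg vanishing theorem, one reaches the precise description $|M_4||_C=|K_C+S|_C|$. Feeding these three steps back against the bound $K_X^3\geq 2$ then produces exactly the equivalence recorded in the theorem.

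The step I expect to be the main obstacle is securing the equality in Claim \ref{44}.3. The vanishing theorem only delivers the inclusion $|M_4||_C\supset|K_C+S|_C|+(\text{fixed divisor})$, and the delicate point is to rule out any strictly larger movable subsystem on $C$; here one must leverage the numerical identity $(4\pi^*(K_X)\cdot C)=8=\deg(K_C+S|_C)$ to force $|M_4||_C=|K_C+S|_C|$ on the nose. This is where the hyperellipticity of $C$ — which makes $M_1|_C$ the $g^1_2$ threaded through $\varphi_1$ — together with the genus bound $g(C)\geq 4$ is genuinely needed, and it is the technical heart on which the extremal case $K_X^3=2$ turns.
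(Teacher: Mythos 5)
Your route is the paper's own: Theorem \ref{d=3} is obtained there exactly as the assembly of Claims \ref{44}.1--\ref{44}.3, and your account of each ingredient is faithful, including the technical heart of Claim \ref{44}.3 — the degree count $\deg(M_4|_C)\leq 4\xi=8=\deg(K_C+S|_C)$ forcing $|M_4||_C=|K_C+S|_C|$ on the nose, with the hyperellipticity of $C$ (so that $M_1|_C$ is the $g^1_2$) and $g(C)\geq 4$ doing the real work. (Your gloss that Claim \ref{44}.1 gives the ``identification $\xi=K_X^3$'' is formally what the two displayed inequalities there yield, though the claim itself only records the equivalence $K_X^3>2\iff\xi>2$; this is harmless.)

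The genuine gap is that your final assembly inverts the equivalence. Claim \ref{44}.3 shows that when $K_X^3=2$ the system $|M_4||_C=|K_C+S|_C|$ is composed with the hyperelliptic involution, so $\map_4$ is generically finite of degree two, i.e. \emph{non}-birational; Claim \ref{44}.2 shows that $\xi>2$ (equivalently $K_X^3>2$, by Claim \ref{44}.1) gives $\alpha_4=\xi>2$ and hence birationality via \cite[Theorem 3.6]{MZ}. Together with the bound $K_X^3\geq 2$ of \cite[Theorem 1.5(2)]{MA}, the three claims therefore prove: $\map_4$ is birational if and only if $K_X^3\neq 2$. The printed statement of Theorem \ref{d=3} evidently carries a typo (``birational'' should read ``non-birational''), as Example B confirms — it cites Theorem \ref{d=3} to conclude that $p_g(X)=4$, $K_X^3=2$ and generically finite $\map_1$ force \emph{non-birational} $\map_4$ — and as the use of Theorem \ref{d=3} inside the proof of Theorem \ref{4b} requires. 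Your closing sentence, that feeding the three steps back ``produces exactly the equivalence recorded in the theorem,'' is thus false as written and is contradicted by your own third step: at the boundary value $K_X^3=2$ you construct a double cover, not a birational map. A correct write-up must either prove the corrected equivalence or explicitly flag the sign error; as it stands, the proposal endorses a statement that its own ingredients refute.
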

\medskip

\subsection{\bf $\map_4$ in the case $p_g(X)=4$ and $d_1=2$.}\label{42}\

In this case, $\dim (\Gamma)=\dim(X)-1=2$. Pick a general fiber $C$ of $f$. We have
$$\pi^*(K_X)|_S\equiv w_2C+E_1'|_S$$
where 
\begin{equation}\label{e4.1}w_2:=\deg(s)\deg(\varphi_1(X'))\geq p_g(X)-2=2.\end{equation}
Similar to the argument in the last section, we only need to study the property of $\varphi_4|_C$ for a general curve $C$. 
\medskip

{\bf Claim \ref{42}.1.}\label{g=3} {\em If $g(C)\geq 3$, $\map_4$ is birational.} 
\begin{proof} We take $|G|=|S|_S|$ on $S$. Then $\beta=w_2\geq 2$. It follows, from Theorem \ref{CZkey}, that 
$\xi\geq \frac{\deg(K_C)}{1+\frac{1}{p}+\frac{1}{\beta}}\geq\frac{8}{5}$. Then $\alpha_4=(4-1-1-\frac{1}{2})\xi\geq \frac{12}{5}>2$. By Theorem \ref{CZkey}, $\varphi_4$ is birational.
\end{proof}

In the case $g(C)=2$, one gets
$\xi\geq \frac{2}{1+1+\frac{1}{2}}=\frac{4}{5}$. Then $\alpha_4=(4-1-1-\frac{1}{2})\xi\geq \frac{6}{5}>1$. By Theorem \ref{CZkey} once more, one gets $\xi\geq 1$. 
\medskip

{\bf Claim \ref{42}.2.} {\em 
Assume $g(C)=2$ and $\xi>1$. Then $\xi$ has the following explicit lower bounds:
\begin{itemize}
\item[(A)] If $w_2\geq 3$, $\xi\geq \frac{6}{5}$; $\xi=\frac{6}{5}$ implies $w_2=3$; $K_X^3>\frac{72}{5}$ implies $\xi>\frac{6}{5}$.
\item[(B)] If $w_2=2$, $\xi\geq \frac{8}{7}$. 
\end{itemize}}
\begin{proof} We only prove (A) while omitting parallel argument for (B). 

{}Find an integer $l_0>5$ such that $\xi\geq \frac{l_0+1}{l_0}$. Set $m'=l_0-1$ and then we have
$$\alpha_{m'}=(l_0-1-2-\frac{1}{\beta})\xi\geq (l_0-\frac{10}{3})\cdot\frac{l_0+1}{l_0}>l_0-3>1.$$
By Theorem \ref{CZkey}, one gets $\xi\geq \frac{l_0}{l_0-1}$. Recursively running this program as long as $m'\geq 5$, so we eventually get $\xi\geq \frac{6}{5}$. Clearly, if $w_2>3$, one gets $\xi>\frac{6}{5}$. 

If $K_X^3>\frac{72}{5}$, since $(\pi^*(K_X)\cdot S^2)\geq 3\xi=\frac{18}{5}$, we have
$$(\pi^*(K_X)|_S)^2\geq \sqrt{K_X^3\cdot (\pi^*(K_X)\cdot S^2)}\geq \sqrt{\frac{18}{5}K_X^3}$$
by the Hodge index theorem on a general member of $|n\pi^*(K_X)|$.
Suppose $\xi=\frac{6}{5}$. Since $\nu_0=\frac{(\pi^*(K_X)|_S)^2}{2\xi}>3$, Lemma \ref{rr}
implies
$$\pi^*(K_X)|_F\genum (3+\eta)C$$ for a small rational number $\eta>0$. Now with $\beta>3$ and applying 
Theorem \ref{CZkey} one more time, one would get $\xi>\frac{6}{5}$, a contradiction. Thus anyway we have $\xi>\frac{6}{5}$. We are done.
\end{proof}

{\bf Claim \ref{42}.3.} {\em Assume $g(C)=2$. Then $\varphi_4$ is birational in any of the following cases:
\begin{itemize}
\item[(a)] $w_2\geq 3$ and $\xi>\frac{6}{5}$;
\item[(b)] $w_2=2$ and $\xi>\frac{4}{3}$.
\end{itemize}}
\begin{proof} For case (a), since $\alpha_4=(4-1-1-\frac{1}{\beta})\xi\geq \frac{5}{3}\xi>2$,  Theorem \ref{CZkey} implies that $\varphi_4$ is birational.

For case (b), since $\alpha_4\geq \frac{3}{2}\xi>2$, $\varphi_4$ is birational.
\end{proof}

{\bf Claim \ref{42}.4.} {\em When $g(C)=2$, $w_2=2$, $K_X^3>28$ and $\frac{8}{7}\leq\xi\leq \frac{4}{3}$, $\varphi_4$ is birational.}
\begin{proof} 
Since $(\pi^*(K_X)\cdot S^2)\geq w_2\xi=\frac{16}{7}$, we have
$$(\pi^*(K_X)|_S)^2\geq \sqrt{K_X^3\cdot (\pi^*(K_X)\cdot S^2)}\geq \sqrt{\frac{16}{7}K_X^3}$$
by the Hodge index theorem on a general member of $|n\pi^*(K_X)|$.

When $K_X^3>28$, we have
$$\nu_0=\frac{(\pi^*(K_X)|_S)^2}{2\xi}\geq \frac{3}{2}\sqrt{\frac{1}{7}K_X^3}>3.$$
It follows from Lemma \ref{rr} that
$$\pi^*(K_X)|_F\genum (3+\eta)C$$ for a small rational number $\eta>0$. 

Now we are actually in the situation $w_2>3$. The statement follows directly frm Claim \ref{42}.2. and Claim \ref{42}.3. 
\end{proof}

\begin{lem}\label{non} When $p_g(X)\geq 3$, $d_1=2$ and $\xi=1$, $\varphi_4$ is non-birational.
\end{lem}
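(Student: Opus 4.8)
The claim is that under $p_g(X)\ge 3$, $d_1=2$, and $\xi=1$, the map $\map_4$ is non-birational. Recall $\xi=(\pi^*(K_X)\cdot C)=(L\cdot C)$ where $C$ is a general fiber of $f$ (a curve of genus $\ge 2$) and $L=\pi^*(K_X)|_S$ for a general $S\in|M_1|$. The plan is to show that $\xi=1$ forces very tight numerical constraints, pin down $g(C)=2$ and the restriction $M_1|_C$, and then prove that $\map_4|_C$ fails to separate points, so the global map cannot be birational.

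\medskip

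\noindent\textbf{Step 1: Constrain the numerics from $\xi=1$.} First I would observe that since $|M_1|_S|$ is (generically) a nontrivial linear system with $d_1=2$, a general member $C$ moves in a family on $S$, so $(L\cdot C)=1$ being odd and $C$ moving should, via the Hodge index theorem, force $g(C)=2$ — this is exactly parallel to the argument in the proof of Claim \ref{pg3d2}.1, where $g(C)\ge 3$ already yields $\xi\ge\frac43>1$. Thus $\xi=1$ is only compatible with $g(C)=2$, i.e. $C$ is a genus $2$ fiber. I would also record from Claim \ref{pg3d2}.2 (or its analogue) that $\xi=1$ forces $L^2=1$ and hence $K_X^3=1$, so we are in the minimal-volume situation.

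\medskip

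\noindent\textbf{Step 2: Identify $M_1|_C$ and compute degrees.} Next I would determine the degree of the moving part of $|4K_{X'}|$ restricted to $C$. Using Kawamata--Viehweg vanishing as in Relation (\ref{e1}), one has
\begin{equation*}
|K_{X'}+\roundup{2\pi^*(K_X)}+M_1||_S=|K_S+\roundup{2L}|_S|\supset |K_S+\roundup{2L}|+\text{(fixed divisor)},
\end{equation*}
and restricting once more to $C$ produces a system of the form $|K_C+D|$ on the genus $2$ curve $C$ with $\deg D$ bounded above by $2\xi=2$ and below by $\xi=1$. Since $(4\pi^*(K_X)\cdot C)=4\xi=4$, the total degree of $M_4|_C$ is at most $4$. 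The key point is that on a genus $2$ curve $|K_C|$ is the hyperelliptic $g^1_2$, so $|K_C+D|$ with $\deg D\le 2$ gives at most $\deg(K_C+D)\le 4=\deg(2K_C)$; hence $M_4|_C$ is contained in a system of degree $\le 4$ that factors through the hyperelliptic involution, yielding a map of degree $2$ onto its image and failing to separate the conjugate pairs of points on $C$.

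\medskip

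\noindent\textbf{Step 3: Assemble the non-birationality.} Finally I would argue that since $\map_4$ distinguishes the different general curves $C$ (because $p_g(S)>0$ makes $|K_S+\roundup{2L}|$ separate the general members $C$ of $|G|=|M_1|_S|$), the generic fiber of $\map_4$ is concentrated on a single $C$, and on that $C$ the restricted map is exactly the degree-$2$ hyperelliptic map. Therefore $\map_4$ is generically finite of degree $2$, hence non-birational. The main obstacle I anticipate is Step 2: one must show the degree bound on $D$ is actually tight enough that $M_4|_C$ cannot exceed $|2K_C|$ and cannot be the full $|3K_C|$ (which would be very ample); this requires carefully tracking that $\deg(M_4|_C)\le 4\xi=4$ with equality forcing $M_4|_C\sim 2K_C$ rather than any non-hyperelliptic degree-$4$ system, i.e. ruling out that the fixed divisor subtracted is trivial while the residual system happens to separate the hyperelliptic conjugates. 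Controlling this boundary case — analogous to the delicate endgame in Claim \ref{2/3}.2 and Claim \ref{44}.3 — is where the real work lies.
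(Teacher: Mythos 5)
Your overall skeleton (reduce to the general genus-2 fiber $C$, bound $\deg(M_4|_C)\le 4\xi=4$, then show the restricted system is composed with the hyperelliptic involution) is the right one, and it matches the substance of what the paper appeals to; but your Step 2 has a genuine gap at exactly the decisive point, and it is not a removable boundary technicality. On a genus-2 curve, Riemann--Roch makes every degree-2 divisor class effective, so \emph{every} line bundle $N$ of degree $4$ on $C$ can be written $N\sim K_C+P+Q$; the map given by $|N|$ is $2$-to-$1$ onto a conic precisely when $P+Q\sim K_C$ (i.e.\ $N\sim 2K_C$), but it is \emph{birational} (onto a nodal plane quartic, identifying only $P$ with $Q$) whenever $P+Q\not\sim K_C$. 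Hence ``$\deg(M_4|_C)\le 4$'' does \emph{not} imply that $|M_4||_C$ factors through the hyperelliptic involution, and since $\map_4$ separates distinct general fibers $C$, the lemma is exactly equivalent to the statement you leave open: that $M_4|_C\sim 2K_C$ (or that the moving part of $|M_4||_C$ has degree $\le 3$). You acknowledge this (``where the real work lies'') but give no argument for it. The paper's own proof is a one-line citation of \cite[Proposition 4.6]{MZ}, whose content is precisely this identification (proved there for $p_g\geq 5$, with the argument carrying over verbatim to $p_g=3,4$); so a complete write-up must either cite that proposition or pin down the residual divisor $D$ geometrically, as the paper does in the analogous endgames of Claim \ref{2/3}.2 (via the base point of $|K_{F_0}|$, using $\pi^*(K_X)|_F\leq K_F$) and Claim \ref{44}.3 (via $\deg(\map_1)=2$ forcing $M_1|_C$ to be a $g^1_2$).

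There is also a secondary error in your Step 1: you read Claim \ref{pg3d2}.2 backwards. It states $K_X^3>1\iff L^2>1$, and the paper deduces $\xi=1$ \emph{from} $K_X^3=1$, not conversely. In fact $\xi=1$ forces neither $L^2=1$ nor $K_X^3=1$: already for $p_g(X)=4$ and $d_1=2$ one has $L^2\geq w_2\xi\geq 2$ by (\ref{e4.1}), and Theorem \ref{d=2} shows $\xi=1$ is compatible with arbitrarily large $K_X^3$ (indeed that is the whole point of that theorem). This slip is not load-bearing, since Steps 2 and 3 never use $K_X^3=1$, but it should be deleted; the correct reduction $\xi=1\Rightarrow g(C)=2$ follows from Claim \ref{pg3d2}.1 for $p_g=3$ and from the bound $\xi\geq\frac{8}{5}$ in Claim \ref{42}.1 for $p_g=4$.
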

\begin{proof} This is simply a copy of \cite[Proposition 4.6]{MZ} where the proof trivially follows  with $p_g(X)=3,\ 4$. So we omit the details. \end{proof}

So we can conclude the following:

\begin{thm}\label{d=2} Let $X$ be a minimal projective 3-fold of general type. Assume $p_g(X)=4$, $K_X^3>28$ and $d_1=2$. Then $\varphi_4$ is non-birational if and only if $(K_X\cdot C)=1$ for the general fiber $C$ of $f$. In this situation, $g(C)=2$.
\end{thm}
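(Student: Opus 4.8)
The plan is to reduce Theorem~\ref{d=2} to the numerical analysis already carried out in the four claims of subsection~\ref{42}, and to interpret the resulting dichotomy geometrically. The hypothesis $p_g(X)=4$ and $d_1=2$ places us exactly in the setting of \ref{42}, so a general fiber $C$ of $f$ is a curve and we may study $\varphi_4|_C$. The whole analysis there branches on $g(C)$, on $w_2$, and on the value of $\xi=(\pi^*(K_X)|_S\cdot C)$, which by the projection formula equals $(K_X\cdot C)$ up to the identification made by $\pi$; so the statement ``$(K_X\cdot C)=1$'' is really the statement ``$\xi=1$''. My first step is therefore to record that $\xi=(K_X\cdot C)$ and that, by the computation preceding Claim~\ref{42}.2, one always has $\xi\geq\frac{4}{5}$, with the refinement $\xi\geq 1$ whenever $g(C)=2$.

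Next I would assemble the birationality direction. If $g(C)\geq 3$, Claim~\ref{42}.1 gives birationality of $\varphi_4$ outright, so in that case $\varphi_4$ is birational and there is nothing to exclude; I must then argue that under $K_X^3>\frac{72}{5}$ the case $g(C)\geq 3$ forces $\xi>1$ anyway (indeed Claim~\ref{42}.1 yields $\xi\geq\frac85>1$), which is consistent with the claimed equivalence. The substantive case is $g(C)=2$. Here I split on $w_2$. For $w_2\geq 3$, Claim~\ref{42}.2(A) shows that $K_X^3>\frac{72}{5}$ forces $\xi>\frac{6}{5}$, and then Claim~\ref{42}.3(a) gives birationality. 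For $w_2=2$, Claim~\ref{42}.2(B) gives $\xi\geq\frac{8}{7}$; if $\xi>\frac{4}{3}$ we invoke Claim~\ref{42}.3(b), and if $\frac{8}{7}\leq\xi\leq\frac{4}{3}$ we invoke Claim~\ref{42}.4. Thus, under the standing hypothesis $K_X^3>\frac{72}{5}$, whenever $\xi>1$ (equivalently $(K_X\cdot C)>1$) the map $\varphi_4$ is birational. Conversely, the non-birationality direction is supplied directly by Lemma~\ref{non}: when $p_g(X)\geq 3$, $d_1=2$ and $\xi=1$, the map $\varphi_4$ is non-birational. Combining the two directions gives the stated equivalence, and the last assertion $g(C)=2$ follows because $\xi=1$ is incompatible with $g(C)\geq 3$ by Claim~\ref{42}.1.

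The one point that needs care, and which I expect to be the main obstacle in writing this cleanly, is verifying that the case $w_2=2$, $g(C)=2$, $\xi=1$ genuinely lands in the hypotheses of Lemma~\ref{non} and is not secretly excluded by the bound $K_X^3>\frac{72}{5}$. I would check that the Hodge-index estimate used in Claim~\ref{42}.2 does not by itself rule out $\xi=1$ when $K_X^3>\frac{72}{5}$; the refinements there only push $\xi$ above $\frac{6}{5}$ \emph{under the additional assumption} $\xi>1$, so $\xi=1$ remains a genuinely possible (non-birational) value. In other words, the numerical machinery improves $\xi$ only once it is already strictly bigger than $1$, so the borderline value $\xi=1$ survives and is precisely the non-birational locus governed by the genus-$2$ canonical degree-$1$ curve family. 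Making this logical structure explicit—that all the optimizations are conditional on $\xi>1$ and hence leave $\xi=1$ untouched—is the delicate bookkeeping step.

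Finally, I would phrase the conclusion so that the equivalence reads symmetrically: $\varphi_4$ non-birational $\iff \xi=1 \iff (K_X\cdot C)=1$, with the understanding, justified at the outset, that $\xi=(K_X\cdot C)$ for the general fiber $C$. A short closing sentence records that in the non-birational case one automatically has $g(C)=2$, completing the proof.
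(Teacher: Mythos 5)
Your proposal is correct and takes essentially the same route as the paper: Theorem \ref{d=2} is presented there precisely as the conclusion of the case analysis in subsection \ref{42}, i.e.\ Claim \ref{42}.1 (the case $g(C)\geq 3$), Claim \ref{42}.2 combined with Claims \ref{42}.3 and \ref{42}.4 (the case $g(C)=2$, $\xi>1$, split by $w_2$), and Lemma \ref{non} for the non-birational direction when $\xi=(K_X\cdot C)=1$. Your observation that all the numerical improvements of $\xi$ in Claim \ref{42}.2 are conditional on $\xi>1$, so that the borderline value $\xi=1$ survives and is exactly the non-birational locus covered by Lemma \ref{non}, is precisely the logical structure the paper relies on.
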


\begin{rem}\label{d3P3} We put on the assumption $K_X^3>\frac{72}{5}$ in Theorem \ref{d=2} just to eliminate the situation: $g(C)=2$, $w_2=3$ and $\xi=\frac{6}{5}$. We do not know if this can really happen. It might be possible to analyze it in a similar way to that of Claim \ref{42}.4. But then, since the surface $\Sigma$ is of degree 3 in $\bP^3$,  there would be fairly many cases to do. Such a surface $\Sigma$ can be either normal or non-normal (see, for instance, Abe-Furushima \cite{AF}, Miyanishi-Zhang \cite{MZ1, MZ2}, Reid \cite{RN} and Ye \cite{Ye}).
\end{rem}
\medskip

\subsection{\bf $\map_4$ in the case $p_g(X)=4$ and $d_1=1$}\label{41}\
 
We have an induced fibration $f:X'\rw \Gamma$ with the general fiber $F$. Since $p_g(F)>0$ and by the surface theory, $F$ must be among the following types:
\begin{itemize}
\item[(a)] $p_g(F)=2$ and $K_{F_0}^2=1$.
\item[(b)] $K_{F_0}^2\geq 3$.
\item[(c)] other surfaces with $p_g(F)>0$.
\end{itemize}
\medskip

{\bf Claim \ref{41}.1.}\label{(1,2)}
{\em If $F$ is of Type (a), $\varphi_4$ is automatically non-birational and $X$ has a natural curve family ${\mathscr C}$ with $(K_X\cdot C_0)=1$ for the general member $C_0\in {\mathscr C}$. }
\begin{proof} Just take the relative canonical map of $f$ as what we have seen in \cite[Theorem 1.4]{MZ}. The curve family ${\mathscr C}$ is composed of all those fibers of the relative canonical map of $f$. We omit more details to avoid unnecessary redundancy. 
\end{proof}

{\bf Claim \ref{41}.2.}\label{b} {\em If $F$ is of Type (b), then $\map_4$ is birational.}

\begin{proof} According to \cite[4.8]{MZ}, it is sufficient to assume $g(\Gamma)=0$, i.e. $\Gamma\cong \bP^1$. Pick a general fiber $F$ of $f$. By Kawamata-Viehweg vanishing, we have
\begin{equation}\label{eb}
|K_{X'}+\roundup{3\pi^*(K_X)-\frac{1}{3}E_1'}||_F\supset
|K_F+\roundup{L_{1/3}}|+\text{(fixed divisor)}
\end{equation}
where $L_{1/3}:=(3\pi^*(K_X)-F-\frac{1}{3}E_1')|_F\equiv \frac{8}{3}\pi^*(K_X)|_F$. 

By \cite[Lemma 3.7]{MZ}, we have 
$$L_{1/3}\genum (2-\eps)\sigma^*(K_{F_0})$$
for any small rational number $\eps>0$. Since $K_{F_0}^2\geq 3$, a similar argument to ($\ddag$) in Claim \ref{R2}.1 shows that $|K_{F}+\roundup{L_{1/3}}|$ satisfies the conditions of Lemma \ref{masek}. Thus it follows that $\map_4$ is birational.
\end{proof}

{\bf Claim \ref{41}.3.}\label{(1,1)} {\em If $F$ is of Type (c) and $K_X^3>12$, $\map_4$ is birational.}
\begin{proof} Take $|G|:=|2\sigma^*(K_{F_0})|$. Lemma \ref{_*} says that $\map_4$ distinguishes different general members of $|G|$. Since, under the condition of the claim, one has $\tau_0>3$, Lemma \ref{RR} implies
$$\pi^*(K_X)\genum (3+\delta_0)F$$
for some very small rational number $\delta_0>0$ and 
$$L_{1/3}\equiv \frac{8}{3}\pi^*(K_X)|_F\genum (2+\eta_0)\sigma^*(K_{F_0})$$
which directly implies the birationality of $\map_4|_F$ by Theorem \ref{CZkey}. We are done.
\end{proof}

So we have proved the following:

\begin{thm}\label{411} Let $X$ be a minimal projective 3-fold of general type. Assume $p_g(X)=4$, $K_X^3>12$ and $d_1=1$. Then $\map_4$ is birational if and only if $X$ is not canonically fibred by (1,2) surfaces.
\end{thm}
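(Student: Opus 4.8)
The plan is to deduce the theorem directly from the trichotomy of the general fiber set up at the beginning of this subsection, so that the genuine content lives in the three preceding claims while the theorem itself is a clean assembly. Since $d_1=1$, the 3-fold $X$ is canonically fibred by surfaces through $f\colon X'\rw\Gamma$, whose general fiber $F$ is a surface of general type with $p_g(F)>0$. By the surface classification recalled here, exactly one of the three types (a), (b), (c) occurs, and the hypothesis ``$X$ is canonically fibred by $(1,2)$ surfaces'' is precisely the statement that $F$ is of Type (a), i.e. $(K_{F_0}^2,p_g(F))=(1,2)$. Accordingly I would phrase the entire argument as a case analysis according to the type of $F$.

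For the implication that birationality excludes the $(1,2)$-fibred case, I would argue by contrapositive: if $X$ is canonically fibred by $(1,2)$ surfaces, then $F$ is of Type (a), and Claim \ref{41}.1 shows at once that $\map_4$ is non-birational (it even produces the associated curve family of canonical degree $1$). Hence birationality of $\map_4$ forces $F$ not to be a $(1,2)$ surface. Note that this half requires no lower bound on $K_X^3$.

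For the converse, suppose $X$ is not canonically fibred by $(1,2)$ surfaces, so that $F$ is of Type (b) or (c). If $F$ is of Type (b), that is $K_{F_0}^2\geq 3$, Claim \ref{41}.2 yields the birationality of $\map_4$; if $F$ is of Type (c), Claim \ref{41}.3 yields it under the standing hypothesis $K_X^3>12$. Since Types (a), (b), (c) exhaust all possibilities for $F$, combining the two implications gives the asserted equivalence.

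The only place where the numerical hypothesis $K_X^3>12$ is genuinely consumed is the Type (c) case, and this is the step I expect to be the main obstacle. There one needs the ratio $\tau_0=\frac{K_X^3}{3K_{F_0}^2}$ to be large enough ($\tau_0>3$) so that Lemma \ref{RR} furnishes $\pi^*(K_X)\genum(3+\delta_0)F$ together with the restricted bound $L_{1/3}\genum(2+\eta_0)\sigma^*(K_{F_0})$ that feeds the restriction argument on $F$. The delicate point is to push the coefficient of $\sigma^*(K_{F_0})$ strictly past $2$ while keeping $|2\sigma^*(K_{F_0})|$ base point free, so that $\map_4$ separates general members of $|G|=|2\sigma^*(K_{F_0})|$ by Lemma \ref{_*}; the reduction to $g(\Gamma)=0$ folded into Claim \ref{41}.2 via \cite[4.8]{MZ} is a further bookkeeping detail to keep in check. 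By contrast, the Types (a) and (b) contributions are essentially formal once those claims are in hand.
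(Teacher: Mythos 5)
Your proposal is correct and matches the paper's own argument: the paper states Theorem \ref{411} immediately after Claims \ref{41}.1, \ref{41}.2 and \ref{41}.3 with the words ``So we have proved the following,'' i.e.\ exactly the assembly you describe, with Claim \ref{41}.1 handling the Type (a) (non-birational) direction and Claims \ref{41}.2, \ref{41}.3 handling Types (b) and (c), the latter being the only place $K_X^3>12$ is used. Your reading of where the numerical hypothesis enters (via $\tau_0>3$ and Lemma \ref{RR} in the Type (c) case) is also exactly as in the paper.
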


Now we prove the following:

\begin{thm}\label{4b}(=Theorem \ref{m4}(1)) Let $X$ be a minimal projective 3-fold of general type. Assume $p_g(X)=4$ and $K_X^3>28$. Then $\map_4$ is birational if and only if $X$ does not contain any genus 2 curve family of canonical degree 1.
\end{thm}
\begin{proof}
If $\map_4$ is non-birational, then Theorem \ref{d=3}, Theorem \ref{d=2} and Theorem \ref{411} imply that $X$ is either canonically fibred by curves of canonical degree 1 or canonically fibred by (1,2) surfaces. For the later case, there is also a genus 2 curve family of canonical degree 1 by Claim \ref{41}.1. 

Conversely, if $X$ admits a genus 2 curve family ${\mathscr C}$ of canonical degree 1, we see $d_1\leq 2$. Otherwise, $(K_X\cdot C_0)=(\pi^*(K_X)\cdot \tilde{C}_0)\geq (S\cdot \tilde{C}_0)\geq 2$ for a general member $C_0\in {\mathscr C}$ since $|S||_{\tilde{C}_0}$ gives a generically finite map, where $\tilde{C}_0$ denotes the strict transform of $C_0$.

Now assume $d_1=2$. We want to show that ${\mathscr C}$ is the canonical curve family, i.e. $C=\tilde{C}_0$. Otherwise, $(\pi^*(K_X)\cdot \tilde{C}_0)\geq (S\cdot \tilde{C}_0)\geq 2$ since $S|_{\tilde{C}_0}$ is moving and $g(\tilde{C}_0)=2$, a contradiction. Now since $(\pi^*(K_X)\cdot C)=1$, $\map_4$ is non-birational by Theorem \ref{d=2}.

Assume $d_1=1$. If $\tilde{C}_0\not\subset F$ for a general fiber $F$, then $F|_{\tilde{C}_0}$ is moving and thus $(\pi^*(K_X)\cdot \tilde{C}_0)\geq (F\cdot \tilde{C}_0)\geq 2$, a contadiction. Thus $\tilde{C}_0\subset F$. If $F$ is not a (1,2) surface, we have
$$(\pi^*(K_X)\cdot \tilde{C}_0)\geq \frac{2}{3}(\sigma^*(K_{F_0})\cdot \tilde{C}_0)\geq \frac{4}{3}$$
since $\tilde{C}_0$ is moving on $F$, which is again impossible. So $F$ is a (1,2) surface. Clearly $\map_4$ is non-birational.
\end{proof}

\begin{rem} When $\map_4$ is non-birational, the curve family in Theorem \ref{m4}(1) is uniquely determined by $X$. Such family is called ``canonical curve family'' of $X$.
\end{rem}

\section{\bf Characterizing the birationality of $\map_4$ (Part B)}

We study $\map_4$ for the cases $p_g(X)=2,\ 3$ in this section. 
\medskip

\subsection{\bf $\map_4$ in the case $p_g(X)=3$ and $d_1=2$}\label{3-2}
\ 

This is corresponding to Subsection \ref{pg3d2}. We keep the same notation there. 
\medskip

{\bf Claim \ref{3-2}.1.}\label{bir} {\em If $K_X^3>48$ and $g(C)\geq 3$, $\map_4$ is birational.}
\begin{proof} We have seen from Claim \ref{pg3d2}.1 that $\xi\geq \frac{4}{3}$. 

If $\xi>2$, then $|K_F+\roundup{2L}||_C\supset |K_C+\roundup{L}|_C|+\text{(fixed divisor)}$ and the last linear system gives a birational map. Thus $\map_4$ is birational.

Assume $\xi\leq 2$. Then since 
$$L^2=(\pi^*(K_X)^2\cdot S)\geq \sqrt{K_X^3\cdot (\pi^*(K_X)\cdot S)^2}\geq \sqrt{\frac{4}{3}K_X^3},$$
we have $\nu_0>2$ and Lemma \ref{rr} implies
$$\pi^*(K_X)|_S\equiv (2+\eta_0)C+J_{\eta_0}$$
for some very small rational number $\eta_0>0$. Now the vanishing theorem gives
$$|K_F+\roundup{2L-\frac{1}{2+\eta_0}J_{\eta_0}}|_C=
|K_C+D_{\eta_0}|$$
where $D_{\eta_0}:=\roundup{2L-C-\frac{1}{2+\eta_0}J_{\eta_0}}|_C$ with 
$$\deg(D_{\eta_0})\geq (2-\frac{1}{2+\eta_0})\xi>2. $$
Thus $\map_4|_C$ is birational and so is $\map_4$.
\end{proof} 

{\bf Claim \ref{3-2}.2.}\label{g2} {\em If $K_X^3>144$, $g(C)=2$ and $\xi>1$, then $\map_4$ is birational.}
\begin{proof} One has $(\pi^*(K_X)\cdot S^2)\geq \xi>1$. 
On the other hand, since $\xi\leq (K_F\cdot C)=2$,  
one has $\nu_0\geq \frac{(\pi^*(K_X)^2\cdot S)}{4}>3$ and Lemma \ref{rr} implies
$$\pi^*(K_X)|_S\equiv (3+\eta_1)C+J_{\eta_1}$$ for some rational number $\eta_1>0$ and for some effective $\bQ$-divisor $J_{\eta_1}$. In particular, one has $\beta>3$. 

{}Fix an integer $l_0>5$ such that $\xi\geq \frac{l_0+1}{l_0}$. Set $m'=l_0-1$ and then we have
$$\alpha_{m'}=(l_0-1-2-\frac{1}{\beta})\xi> (l_0-\frac{10}{3})\cdot \frac{l_0+1}{l_0}>l_0-3>1.$$
By Theorem \ref{CZkey}, one gets $\xi\geq \frac{l_0}{l_0-1}$. Recursively running this program as long as $m'\geq 5$, so we eventually get $\xi\geq \frac{6}{5}$. 

Now the vanishing theorem gives
$$|K_F+\roundup{2L-\frac{1}{3+\eta_1}J_{\eta_1}}|_C=
|K_C+D_{\eta_1}|$$
where $D_{\eta_1}:=\roundup{2L-C-\frac{1}{2+\eta_1}J_{\eta_1}}|_C$
with $\deg(D_{\eta_1})\geq (2-\frac{1}{3+\eta_1})\xi>2$. 
Thus $\map_4|_C$ is birational and so is $\map_4$.
\end{proof}
\medskip

\subsection{\bf $\map_4$ in the case $p_g(X)=2,\ 3$ and $d_1=1$}\label{231}\ 
\medskip

{\bf Claim \ref{231}.1.} {\em If $K_X^3>180$, $p_g(X)=3$ and $F$ is not a (1,2) surface, $\map_4$ is birational.}
\begin{proof} We organize the proof by analyzing the numerical types of $F$. 

If $K_{F_0}^2\geq 19$, we have seen $p_g(F)\geq 2$ in the proof of Claim \ref{215}.1. Take $|G|$ to be the moving part of $|K_F|$. Then $\beta\mapsto \frac{2}{3}$ by \cite[Lemma 3.7]{MZ}. Pick a generic irreducible element $C$ of $|G|$. Clearly $\map_4|_F$ distinguishes different general curves $C$. 
When $|G|$ is not composed of a pencil, then $C^2\geq 2$ and $(\sigma^*(K_{F_0})\cdot C)\geq \sqrt{2K_{F_0}^2}\geq \sqrt{38}$. Thus $\xi\geq \frac{2}{3}(\sigma^*(K_{F_0})\cdot C)>2$. Since $\alpha_4\geq (4-1-
\frac{1}{2}-\frac{1}{\beta})\xi>2$, $\map_4$ is birational. 
When $|G|$ is composed of a pencil and $\xi>2$, $\map_4$ is birational for the same reason. Suppose $\xi\leq 2$. Since we have $(\pi^*(K_X)|_F)^2\geq \frac{4}{9}K_{F_0}^2$ and $K_{F_0}^2\geq 19$,  Lemma \ref{rr} implies $\nu_0>2$  and thus $\beta>2$. Noting that $\xi\geq \frac{2}{3}(\sigma^*(K_{F_0})\cdot C)\geq \frac{4}{3}$ since $F$ is not a (1,2) surface, we have $\alpha_4>(4-1-\frac{1}{2}-\frac{1}{2})\xi\geq 2$ and thus $\map_4$ is birational.

If $K_{F_0}^2\leq 18$ and $K_X^3>180$, Lemma \ref{RR} implies
$\pi^*(K_X)\geq_{\text{num}} \frac{10}{3}F$
and $\pi^*(K_X)|_F\geq \frac{10}{13}\sigma^*(K_{F_0})$. 
Take $|G|$ to be the moving part of $|K_F|$ whenever $F$ is a (2,3) surface and, otherwise, $|G|:=|2\sigma^*(K_{F_0})|$. By Lemma \ref{_*}, $\map_4|_F$ distinguishes different generic irreducible elements $C$ of $|G|$. We have $\alpha_4>2$ in the (2,3) case and $\alpha_4>0$ otherwise and thus $\map_4$ is birational by Theorem \ref{CZkey}.
\end{proof}

So we can conclude the following:
\begin{thm}(=Theorem \ref{m4}(2)) Let $X$ be a minimal projective 3-fold of general type. Assume $p_g(X)=3$ and $K_X^3>180$. Then $\map_4$ is birational if and only if $X$ does not contain any genus 2 curve family of canonical degree 1.
\end{thm}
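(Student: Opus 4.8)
The plan is to run the same dichotomy as in the proof of Theorem \ref{4b}, feeding in the $p_g=3$ results of Subsections \ref{3-2} and \ref{231} in place of their $p_g=4$ analogues. Since $p_g(X)=3$ forces the canonical image into $\bP^2$, one has $d_1\in\{1,2\}$, so only these two configurations need to be examined, and in each the problem reduces to studying $\map_4$ along a generic irreducible element.

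For the forward implication, assume $\map_4$ is non-birational. Suppose first $d_1=2$. Since $K_X^3>180>48$, Claim \ref{3-2}.1 shows that $g(C)\geq 3$ would make $\map_4$ birational; hence the general fiber $C$ of $f$ has $g(C)=2$, and then $\xi\geq 1$ by Claim \ref{pg3d2}.1. As $K_X^3>180>144$, Claim \ref{3-2}.2 excludes $\xi>1$, so $\xi=1$, and the fibration $f\colon X'\to\Gamma$ with $\dim\Gamma=2$ is precisely a genus $2$ curve family of canonical degree $1$. Suppose next $d_1=1$. By Claim \ref{231}.1 the hypothesis $K_X^3>180$ makes $\map_4$ birational whenever the general fiber $F$ of $f$ is not a $(1,2)$ surface; hence $F$ must be a $(1,2)$ surface. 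The relative canonical map of $f$ then furnishes a genus $2$ curve family, and the assertion that its canonical degree equals $1$, which is the analogue of Claim \ref{41}.1, is exactly the delicate point isolated in the final paragraph.

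For the converse, let $\mathscr C$ be a genus $2$ curve family of canonical degree $1$ with general member $C_0$ and strict transform $\tilde C_0$; since $p_g=3$ already gives $d_1\leq 2$, I treat the two surviving cases. If $d_1=2$, then $\tilde C_0$ must be a general fiber $C$ of $f$: otherwise $S|_{\tilde C_0}$ is a moving system on the genus $2$ curve $\tilde C_0$, forcing $(\pi^*(K_X)\cdot\tilde C_0)\geq(S\cdot\tilde C_0)\geq 2$ against $\deg(\mathscr C)=1$. Thus $\xi=1$ and Lemma \ref{non} yields the non-birationality of $\map_4$. If $d_1=1$, then $\tilde C_0\subset F$ for a general fiber $F$ (else $F|_{\tilde C_0}$ is moving and $(\pi^*(K_X)\cdot\tilde C_0)\geq 2$), and $F$ must be a $(1,2)$ surface, since otherwise the canonical restriction inequality gives $(\pi^*(K_X)\cdot\tilde C_0)\geq\frac23(\sigma^*(K_{F_0})\cdot\tilde C_0)\geq\frac43$, contradicting $\deg(\mathscr C)=1$. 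As $X$ is then canonically fibred by $(1,2)$ surfaces, whose genus $2$ relative canonical fibers are hyperelliptic, $\map_4$ is non-birational.

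The genuine difficulty is the $(1,2)$-fibred subcase of the forward direction ($d_1=1$): one must certify that the relative canonical family really has canonical degree exactly $1$, and not merely the a priori value $\xi\geq\frac23$. The lower bound is upgraded by exploiting the size of $\tau_0=K_X^3/(3K_{F_0}^2)=K_X^3/3>60$, so that Lemma \ref{RR}(ii) gives $\xi\geq\frac{\tau_0}{\tau_0+1}>\frac{60}{61}$. The matching identification $\xi=1$ must be extracted from the hyperelliptic geometry of $C$ together with adjunction along the genus $2$ fibration, which yields $(K_{X'}\cdot C)=\deg K_C=2$ and hence $\xi=2-(E_\pi\cdot C)$; this is the mechanism behind Claim \ref{41}.1 and \cite[Theorem 1.4]{MZ}. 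Pinning $(E_\pi\cdot C)$, which a priori is only constrained to lie in $[0,\frac{62}{61}]$, to the single value $1$ is where the sharp threshold $K_X^3>180$ does its work, and is the step I expect to be hardest.
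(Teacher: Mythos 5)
Your architecture is exactly the paper's: the paper's entire proof of this statement is the sentence ``This is parallel to Theorem \ref{4b} by virtue of Claim \ref{3-2}.1, Claim \ref{3-2}.2 and Claim \ref{231}.1,'' and your dichotomy over $d_1\in\{1,2\}$ in both directions --- Claims \ref{3-2}.1, \ref{3-2}.2 plus Claim \ref{pg3d2}.1 forcing $g(C)=2$, $\xi=1$ when $d_1=2$; Claim \ref{231}.1 forcing $F$ to be a $(1,2)$ surface when $d_1=1$; Lemma \ref{non} and the $(1,2)$-surface analysis for the converse --- is precisely the intended expansion of that sentence. The one place you stop short is the identification $\deg({\mathscr C})=1$ (equivalently $\xi=1$) in the $(1,2)$-fibred case, which you explicitly leave open; in the paper this is the analogue of Claim \ref{41}.1, disposed of by reference to \cite[Theorem 1.4]{MZ}. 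Since you flag it as unresolved rather than proving it or correctly reducing it to the cited result, this is the genuine gap in your proposal.

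Moreover, your guess at how that step works points away from the actual mechanism. You propose to pin $(E_\pi\cdot C)$ to the value $1$ and suggest the threshold $K_X^3>180$ is what does this. In fact $\xi$ is pinned from the two sides by different arguments, and neither needs the full strength of $180$. Upper bound: $\pi^*(K_X)|_F=K_F-E_\pi|_F$ is a nef $\bQ$-divisor dominated by $K_F$, and $\sigma^*(K_{F_0})$ is the positive part of the Zariski decomposition of $K_F$, hence maximal among nef $\bQ$-divisors $\leq K_F$; therefore $\pi^*(K_X)|_F\leq \sigma^*(K_{F_0})$ and $\xi\leq (\sigma^*(K_{F_0})\cdot C)=K_{F_0}^2=1$, with \emph{no} volume hypothesis at all (this is the same ``$\pi^*(K_X)|_F\leq K_F$'' device the paper uses in Claim \ref{2/3}.2). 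Lower bound: by Lemma \ref{RR}, $\tau_0=K_X^3/3>60$ lets one take $p/m_0\mapsto \tau_0$ and $\beta\mapsto \tau_0/(\tau_0+1)$, so $\alpha_4=(4-1-\frac{1}{p}-\frac{1}{\beta})\xi>1$, and then \cite[Theorem 3.6]{MZ} gives $4\xi\geq 2+\roundup{\alpha_4}\geq 4$, i.e.\ $\xi\geq 1$; already $\tau_0>4$ (that is, $K_X^3>12$, as in Theorem \ref{411}) suffices here, the constant $180$ being dictated by Claim \ref{231}.1, not by this step. Note finally that the same upper bound $\xi\leq 1$ is what makes the non-birationality assertions in your converse honest: it forces $\deg(M_4|_C)\leq 4\xi\leq 4$ and hence, by the vanishing-theorem sandwich, $|M_4||_C=|2K_C|$; ``the fibers are hyperelliptic'' alone does not rule out a birational degree-$4$ system on a genus $2$ curve.
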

\begin{proof} This is parallel to Theorem \ref{4b} by virtue of Claim \ref{3-2}.1, Claim \ref{3-2}.2 and Claim \ref{231}.1. We omit the details.
\end{proof}

{}Finally we prove the following:

\begin{thm}(=Theorem \ref{m4}(3)) Let $X$ be a minimal projective 3-fold of general type. Assume $p_g(X)=2$ and $K_X^3>855$. Then $\map_4$ is birational if and only if $X$ does not contain any genus 2 curve family of canonical degree 1.
\end{thm}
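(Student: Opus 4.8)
The plan is to run the same scheme as in the proof of Theorem \ref{4b}, now in the hardest range $p_g(X)=2$. Since $p_g(X)=2$ the canonical image lies in $\bP^1$, so $d_1=1$ is forced and $X$ is automatically canonically fibred by surfaces; I keep the setting of \ref{set}, with the general fiber $F$ of $f:X'\rw\Gamma$ a surface of general type with $p_g(F)>0$, and I may assume $\Gamma\cong\bP^1$ by the positive-base reduction used in Claim \ref{231}.1 (cf. \cite[Theorem 3.3]{IJM}). The structural novelty is that here $m_0=1$ and $p\geq p_g(X)-1=1$, so generically $p=1$: the basic canonical restriction inequality \cite[Lemma 3.7]{MZ} only yields $\pi^*(K_X)|_F\genum(\tfrac12-\eps)\sigma^*(K_{F_0})$, i.e. $\beta\to\tfrac12$, whence $\alpha_4=(4-1-\tfrac1p-\tfrac1\beta)\xi=(2-\tfrac1\beta)\xi$ degenerates to $0$. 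Everything therefore hinges on improving $\beta$ strictly beyond $\tfrac12$, which by Lemma \ref{RR} forces the use of a large ratio $\tau_0:=\frac{K_X^3}{3K_{F_0}^2}$; this is exactly what the hypothesis $K_X^3>855$ buys.

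The core is the $p_g=2$ analogue of Claim \ref{231}.1: \emph{if $K_X^3>855$ and $F$ is not a $(1,2)$ surface, then $\map_4$ is birational.} I would argue by the numerical type of $F$. When $K_{F_0}^2$ is large (so $p_g(F)\geq 2$ by Bogomolov--Miyaoka--Yau, as in Claim \ref{215}.1) and $F$ carries no low-degree genus $2$ pencil, I treat $\map_4|_F$ by surface theory, applying Masek's Lemma \ref{masek} to $|K_F+\roundup{3\pi^*(K_X)|_F}|$ via the $(\ddag)$-argument of Claim \ref{R2}.1, since then every curve through a very general point meets $\sigma^*(K_{F_0})$ with large enough multiplicity to secure $(3\pi^*(K_X)|_F\cdot C_P)\geq 4$. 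The delicate fibers are those with small $K_{F_0}^2$ and, more seriously, those carrying a genus $2$ (or hyperelliptic) pencil of small canonical degree, where the curves $C_P$ have $(\sigma^*(K_{F_0})\cdot C_P)=2$ and Masek's second hypothesis needs $\beta\geq\tfrac23$; here I push $\tau_0$ large through $K_X^3>855$ to bring $\beta$ close to $1$ (Lemma \ref{RR}), and then either Masek or the repeated-$\xi$-optimization of \cite[Theorem 3.6]{MZ} (exactly as in Claim \ref{42}.2 and Claim \ref{3-2}.2) gives birationality. The threshold $855$ is the maximum of the numerical demands over all these subcases.

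It remains to analyse $F$ a $(1,2)$ surface, with $C$ the genus $2$ fiber of the relative canonical map of $f$, so $(\sigma^*(K_{F_0})\cdot C)=1$. Lemma \ref{RR} gives $\xi=(\pi^*(K_X)|_F\cdot C)\geq\frac{\tau_0}{\tau_0+1}$, while $\pi^*(K_X)|_F\leq K_F$ gives $\xi\leq 2$; the repeated optimization via \cite[Theorem 3.6]{MZ} (as in Claim \ref{pg3d2}.1) yields $\xi\geq 1$. If $\xi>1$, a vanishing-plus-restriction argument to $C$ (as in Claim \ref{3-2}.2) shows $\deg(M_4|_C)>4$ and that $|M_4||_C$ separates points, so $\map_4$ is birational. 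If $\xi=1$, then $|M_4||_C=|2K_C|$ is the $2{:}1$ bicanonical map of the hyperelliptic genus $2$ curve $C$, so $\map_4$ is non-birational, and the relative canonical map of $f$ is precisely a genus $2$ curve family of canonical degree $1$ (as in Claim \ref{41}.1 and \cite[Theorem 1.4]{MZ}).

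Finally I would assemble the equivalence exactly as for Theorem \ref{4b}. If $\map_4$ is non-birational, the two previous paragraphs force $F$ to be a $(1,2)$ surface with $\xi=1$, which supplies a genus $2$ curve family of canonical degree $1$. Conversely, given such a family $\mathscr C$, its general member $\tilde C_0$ must lie in a fiber $F$ (otherwise $(\pi^*(K_X)\cdot\tilde C_0)\geq(F\cdot\tilde C_0)\geq 2$), and the Hodge-index/moving-curve argument of Claim \ref{2/3}.1 forces $K_{F_0}^2=1$ and $\xi=1$, whence $\map_4$ is non-birational. The main obstacle is the birationality claim of the second paragraph: with $p=1$ there is almost no input from the fibration, so the fibers carrying low-degree genus $2$ pencils resist both Masek's lemma and \cite[Theorem 3.6]{MZ} until $\beta$ is driven very close to $1$, and quantifying this uniformly over every non-$(1,2)$ type of $F$ is what produces — and what one must verify is covered by — the bound $855$.
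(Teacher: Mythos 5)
Your proposal has a genuine structural gap, and it sits exactly where the paper's proof does its real work. Your core claim --- ``if $K_X^3>855$ and $F$ is not a $(1,2)$ surface then $\map_4$ is birational'' --- is strictly stronger than the theorem itself and is not provable: when $p_g(X)=2$ one only has $p=1$, so a genus $2$ curve family of canonical degree $1$ need \emph{not} live on $(1,2)$ fibers. Concretely, a fiber $F$ may carry a genus $2$ pencil $C$ with $(\sigma^*(K_{F_0})\cdot C)=2$ and $C^2=0$; this is compatible with arbitrarily large $K_{F_0}^2$, and $(\pi^*(K_X)|_F\cdot C)=1$ is numerically consistent with $\beta\rightarrow\frac12$ since $1\geq(\frac12-\eps)\cdot 2$. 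Your Hodge-index reduction in the last paragraph (borrowed from Claim \ref{2/3}.1) fails precisely here: for degree $\frac23$ one gets $(\sigma^*(K_{F_0})\cdot\tilde C_0)\leq\frac43<2$, hence $=1$, hence $K_{F_0}^2=1$; for degree $1$ one only gets $(\sigma^*(K_{F_0})\cdot\tilde C_0)\leq 2$, and the pencil case $(\sigma^*(K_{F_0})\cdot\tilde C_0)=2$ escapes. Consequently both implications in your assembly leak: ``non-birational $\Rightarrow$ $(1,2)$ fiber'' and ``degree-$1$ family $\Rightarrow$ $(1,2)$ fiber'' are both unjustified. This is why the paper injects the hypothesis ``no genus $2$ family of canonical degree $1$'' \emph{inside} the separation argument on every fiber (to rule out $\xi_{1,2}=1$ for the bad curves), not merely to exclude $(1,2)$ fibers, and why its converse is proved directly by showing $|4K_{X'}||_C=|2K_C|$ in the spirit of \cite[Proposition 4.6]{MZ}, with no reduction to $(1,2)$ fibers at all.

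The second, related failure is quantitative. Your only mechanism for the delicate fibers carrying a low-degree genus $2$ pencil is ``push $\tau_0$ large through $K_X^3>855$ to bring $\beta$ close to $1$'' via Lemma \ref{RR}. But $\tau_0=\frac{K_X^3}{3K_{F_0}^2}$ and $K_{F_0}^2$ is unbounded: $K_X^3>855$ yields $\tau_0>3$ only when $K_{F_0}^2\leq 95$. In the regime $K_{F_0}^2>96$ --- exactly where such pencils coexist with large $K_{F_0}^2$ --- Lemma \ref{RR} gives nothing beyond $\beta\rightarrow\frac12$, and Masek's hypothesis $({\mathcal L}\cdot C_P)\geq 4$ is out of reach for your ${\mathcal L}$. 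The paper's route there is different: it runs Lemma \ref{masek} as a two-point separation statement with ${\mathcal L}=2L$, $L=\pi^*(K_X)|_F$, isolates the curves $C_{1,2}$ through two very general points with $(\sigma^*(K_{F_0})\cdot C_{1,2})<4$, and for those switches to the surface-level Lemma \ref{rr}: since $L^2\geq\frac14 K_{F_0}^2>24$ while $(L\cdot C_{1,2})<2$, one gets $\nu_0>3$, hence $\beta_{1,2}>3$, then the recursion of Claim \ref{3-2}.2 gives $\xi_{1,2}\geq\frac65$ (and it is here that the theorem's hypothesis is needed to know $\xi_{1,2}>1$ in the first place), so $\alpha_4>2$ and \cite[Theorem 3.6]{MZ} separates the two very general points. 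In short, when $K_{F_0}^2$ is big the large quantity to exploit is $L^2$ (Lemma \ref{rr} on $F$), not $\tau_0$ (Lemma \ref{RR} on $X$); your proposal uses only the latter, and without importing the theorem's hypothesis into this step no bound of the form $K_X^3>855$ can close the argument.
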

\begin{proof} Suppose $X$ does not contain any genus 2 curve family of canonical degree 1. We want to show $\map_4$ is birational. We discuss this according to the numerical types of $F$ while we have an induced fibration $f:X'\rw \Gamma$. When $g(\Gamma)>0$, since $F$ is not a (1,2) surface (otherwise, $X$ will have a genus 2 curve family of canonical degree 1), we have known from \cite{MZ} that $\map_4$ is birational. So we may assume $\Gamma\cong \bP^1$. 

Assume $K_{F_0}^2>96$. It is sufficient to show that $|K_F+\roundup{2L}|$ gives a birational map where $L:=\pi^*(K_X)|_F$. We have
$$(2L)^2\geq K_{F_0}^2>8$$
by assumption. Pick two distinct points $P_1$, $P_2$ in very general positions of $F$. If all the curves $C_{1,2}$ passing through $P_1$ and $P_2$ satisfy $(\sigma^*(K_{F_0})\cdot C_{1,2})\geq 4$ (which means $(2L\cdot C_{1,2})\geq 4$), then Lemma \ref{masek} implies that $|K_F+\roundup{2L}|$ separates $P_1$ and $P_2$. Otherwise, there is such a curve $C_{1,2}$ with $(\sigma^*(K_{F_0})\cdot C_{1,2})<4$ and, in fact, these curves $C_{1,2}$ form a curve family (since $P_1$ and $P_2$ are in very general positions). Thus, by our assumption, we have $(L\cdot C_{1,2})>1$. Since $K_{F_0}^2>96$, we have 
$$\nu_0\geq \frac{L^2}{8}\geq \frac{1}{32}
K_{F_0}^2>3.$$
Lemma \ref{rr} implies $\beta_{1,2}>3$. We may estimate the intersection number $\xi_{1,2}:=(L\cdot C_{1,2})$ using similar method to that in the proof of Claim \ref{g2}.2. The result is $\xi_{1,2}\geq \frac{6}{5}$. Now we have $\alpha_{4}^{1,2}\geq (4-1-1-\frac{1}{\beta_{1,2}})\xi>2$ and Theorem \ref{CZkey} implies that $\map_4|_{C_{1,2}}$ is birational and, in particular, $\map_4$ separates $P_1$ and $P_2$ who are in very general positions. Thus  we have seen $\map_4$ is birational. 

Assume $K_{F_0}^2\leq 95$ and $K_X^3>855$. As we have seen before, $F$ can not be a (1,2) surface. Take $|G|$ to be the moving part of $|K_F|$ whenever $F$ is a (2,3) surface and, otherwise, $|G|:=|2\sigma^*(K_{F_0})|$. Lemma \ref{RR} implies $\tau_0>3$ and  
$$\pi^*(K_X)|_F\genum (\frac{3}{4}+\delta_0)\sigma^*(K_{F_0})$$
for some small rational number $\delta_0>0$. By the vanishing theorem, we are in the position to show $|K_F+2\sigma^*(K_{F_0})+\roundup{Q_f}|$ gives a birational map, where $Q_f$ is a nef and big $\bQ$-divisor. In fact, when $F$ is a (2,3) surface, this is the case due to Theorem \ref{CZkey}. When $F$ is neither a (2,3) surface nor a (1,1) surface, $|K_F+ 2\sigma^*(K_{F_0})+\roundup{Q_f}|$ satisfies the condition of Lemma \ref{masek} by a parallel argument to ($\ddag$) in the proof of Claim \ref{R2}.1 and thus it also gives a birational map. {}Finally if $F$ is a (1,1) surface, Lemma \ref{RR} actually gives $\tau_0>200$ and we are in a much better situation. The vanishing theorem again allows us to consider the linear system $|K_F+\roundup{Q_{1,1}}|$ where $Q_{1,1}\genum (2+\frac{199}{201})\sigma^*(K_{F_0})$ and $Q_{1,1}$ is nef and big. Clearly, this linear system also satisfies the condition of Lemma \ref{masek} still by a similar argument to ($\ddag$). In a word, $\map_4$ is birational. 

Conversely, if $X$ has a genus 2 curve family ${\mathscr C}$ of canonical degree 1, $\map_4$ is non-birational. This can be seen by a similar argument to that of \cite[Proposition 4.6]{MZ}. The point is that, while $\pi^*(K_X)\cdot C)=1$, we will be able to see $|4K_{X'}||_C=|2K_C|$ for a general curve $C\in {\mathscr C}$ (This is not a trivial statement at all!). Since $g(C)=2$, $\map_4$ can not be birational. We omit more details and leave it as an exercise.
\end{proof}

\noindent{\bf Acknowledgment.} I would like to thank Max-Planck-Institut f$\ddot{\text{u}}$r Mathematik (Bonn) for the generous support during my visit there in 2011. I was indebted to D.-Q. Zhang for his skillful helps in many parts and for very fruitful discussions. Thanks are also due to Fabrizio Catanese, Gavin Brown, Jungkai A. Chen and Rita Pardini for constant helps and collaborations.

\end{document}